\def\k{\kappa}
\newcommand{\mB}{\mathcal{B}}
\newcommand{\fh}{\mathfrak{h}}
\newcommand{\bfC}{\mathbf{C}}
\newcommand{\bfQ}{\mathbf{Q}}
\newcommand{\bfR}{\mathbf{R}}
\newcommand{\bfZ}{\mathbf{Z}}
\newcommand{\AQ}{\mathbf{A}}
\newcommand{\ov}{\overline}
\newcommand{\be}{\begin{equation}}
\newcommand{\ee}{\end{equation}}
\newcommand{\bes}{\begin{equation*}}
\newcommand{\ees}{\end{equation*}}
\newcommand{\bs}{\begin{split}}
\newcommand{\es}{\end{split}}
\newcommand{\bss}{\begin{split*}}
\newcommand{\ess}{\end{split*}}
\newcommand{\bmat}{\left[ \begin{matrix}}
\newcommand{\emat}{\end{matrix} \right]}
\newcommand{\bsmat}{\left[ \begin{smallmatrix}}
\newcommand{\esmat}{\end{smallmatrix} \right]}
\newcommand{\bml}{\begin{multline}}
\newcommand{\eml}{\end{multline}}
\newcommand{\bmls}{\begin{multline*}}
\newcommand{\emls}{\end{multline*}}
\DeclareMathOperator{\GL}{GL}
\DeclareMathOperator{\GSp}{GSp}
\DeclareMathOperator{\ord}{ord}
\DeclareMathOperator{\SL}{SL}
\DeclareMathOperator{\Sym}{Sym}
\DeclareMathOperator{\vol}{vol}
\def\c{\chi}
\newcommand{\hs}{\hspace{2pt}}
\newcommand{\tr}{\textup{tr}\hspace{2pt}}
\theoremstyle{plain}
\newtheorem{thm}{Theorem}
\newtheorem{cor}[thm]{Corollary}
\newtheorem{lemma}[thm]{Lemma}
\theoremstyle{definition}
\newtheorem{example}[thm]{Example}
\newtheorem{rem}[thm]{Remark}
\numberwithin{thm}{section}
\numberwithin{equation}{section}
\newcommand{\mat}[4]{\left[ \begin{matrix} #1 & #2 \\ #3 & #4 \end{matrix} \right]}
\newcommand{\pp}{\mathfrak p}
\newcommand{\ZZ}{\mathbf Z}
\newcommand{\QQ}{\mathbf Q}
\newcommand{\re}{\operatorname{Re}}
\newcommand{\Nm}{{\rm N}}
\let\@wraptoccontribs\wraptoccontribs
\begin{document}

\title{On the norms of $p$-stabilized elliptic newforms}
\author[Jim Brown]{Jim Brown$^1$}
\address{$^1$Department of Mathematical Sciences\\
Clemson University\\ Clemson, SC 29634}
\email{jimlb@clemson.edu}
\author[Krzysztof Klosin]{Krzysztof Klosin$^2$}
\address{$^2$Department of Mathematics\\
Queens College\\
City University of New York\\
Flushing, NY 11367}
\email{kklosin@qc.cuny.edu}
\contrib[with an appendix by]{Keith Conrad$^3$}
\address{$^3$Department of Mathematics\\
University of Connecticut\\
Storrs, CT 06269}
\email{kconrad@math.uconn.edu}

\subjclass[2010]{Primary 11F67; Secondary 11F11}
\keywords{$p$-stabilized forms, modular periods, symmetric square $L$-function, Euler products}

\thanks{The first author was partially supported by the National Security Agency under Grant Number H98230-11-1-0137.  The United States Government is authorized to reproduce and distribute reprints not-withstanding any copyright notation herein. The second author was partially supported by a PSC-CUNY Award, jointly funded by The Professional Staff Congress and The City University of New York.}

\maketitle

\begin{abstract}
Let $f \in S_{\kappa}(\Gamma_0(N))$ be a Hecke eigenform at $p$ with eigenvalue $\lambda_f(p)$ for a prime $p \nmid N$.  Let $\alpha_p$ and $\beta_p$ be complex numbers satisfying $\alpha_p + \beta_p = \lambda_{f}(p)$ and $\alpha_p \beta_p = p^{\k-1}$. We calculate the norm of $f_{p}^{\alpha_p}(z) = f(z) - \beta_{p} f(pz)$ as well as the norm of $U_p f$, both classically and adelically.  We use these results along with some convergence properties of the Euler product defining the symmetric square $L$-function of $f$ to give a `local' factorization of the Petersson norm of $f$.
\end{abstract}

\section{Introduction}
Let $\k \geq 2$
and $N \geq 1$ be integers and $p$ an odd prime with $p \nmid N$.  Let $f \in S_{\k}(\Gamma_0(N))$ be a newform. It is well-known that the Petersson norm $\langle f,f \rangle$ serves as a natural period for many $L$-functions of $f$ \cite{HidaInvent81,ShimuraMathAnn77}.

In this paper we focus on related periods $\langle f_p^{\alpha_p}, f_p^{\alpha_p} \rangle$ (defined below) for $\alpha_p$ a Satake parameter of $f$. When $f$ is ordinary at $p$, the forms $f_p^{\alpha_p}$ arise naturally in the context of Iwasawa theory as the objects which can be interpolated into a Hida family. It is in fact in the context of `$p$-adic interpolation' of some automorphic lifting procedures (between two
algebraic groups, one of them being $\GL_2$)  that these calculations arise (see \cite{BrownKlosinGSp(4)} for example);
 however, our results apply in a more general setup as specified below.

Let $f \in S_{\k}(\Gamma_0(N))$ be an eigenform for the $T_p$-operator with eigenvalue $\lambda_{f}(p)$.  Let  $\alpha_p$ and $\beta_p$ be the pair of complex numbers satisfying $\alpha_p + \beta_p = \lambda_f(p)$ and $\alpha_p \beta_p  = p^{\k-1}$.  We set $f_{p}^{\alpha_{p}}(z) = f(z) - \beta_{p} f(pz)$.  In the case that $f$ is ordinary at $p$, we can choose $\alpha_p$ and $\beta_p$ so that $\alpha_{p}$ is a $p$-unit and $\beta_{p}$ is divisible by $p$.
 In this special case  $f_p^{\alpha_p}$ is the $p$-stabilized ordinary newform of tame level $N$ attached to $f$.

Since $f_p^{\alpha_p} = p^{1-\kappa}\beta_p (U_p-\beta_p)f$, calculating $\langle f_p^{\alpha_p}, f_p^{\alpha_p} \rangle$ is in fact equivalent to calculating $\langle U_p f, U_p f \rangle$. While computation of any of these inner products does not present any difficulties  (see Section \ref{Relations between the norms}), it is an accident resulting from the relative simplicity of the Hecke algebra on $\GL_2$, where the $T_p$ and the $U_p$ operators differ by a single term. It turns out that in the higher-rank case it is the calculation of the latter inner product that provides the fastest route to computing the Petersson norm of various $p$-stabilizations. With these future applications in mind we present an alternative approach to calculating $\langle U_p f, U_p f \rangle$, this time working adelically (see Sections \ref{relations} and \ref{sec:adeliccalc}), as this is the method that generalizes to higher genus most readily (see \cite{BrownKlosinGSp(4)}, where this is done for the group $\GSp_4$).

It is well-known
that the Petersson norm $\langle f,f \rangle$ is closely related to the value $L(\kappa, \Sym^2 f)$ at $\kappa$ of the symmetric square $L$-function of $f$.
The absolutely convergent Euler product defining this $L$-function for $\re (s) >\kappa$ converges (conditionally) to the value $L(s, \Sym^2 f)$ when $\re (s)=\kappa$ (this and in fact a more general result is proved in the appendix by Keith Conrad).
On the other hand our computation of $\langle f_p^{\alpha_p}, f_p^{\alpha_p} \rangle$ shows that this inner product differs from $\langle f,f \rangle$ by essentially the $p$-Euler factor of $L(\kappa, \Sym^2 f)$. Combining these facts we exhibit a (conditionally convergent) factorization of $\langle f,f \rangle$ into local components defined via the inner products $\langle f_p^{\alpha_p}, f_p^{\alpha_p} \rangle$ (for details, see Section \ref{Applications to $L$-values}).

The authors would like to thank Henryk Iwaniec, and Keith Conrad would like to thank Gergely Harcos, for helpful email correspondence.

\section{Classical calculation of $\langle f_p, f_p \rangle$ and $\langle U_p f, U_p f \rangle$} \label{Relations between the norms}
Let $N$ be a positive integer.
Let $\Gamma_0(N)\subset \SL_2(\bfZ)$ denote the subgroup consisting of matrices whose lower-left entry is divisible by $N$.
For a holomorphic function $f$ on the complex upper half-plane
$\fh$ and for $\gamma=\bmat a&b \\ c&d \emat \in \GL_2^+(\bfR)$,
where $+$ denotes positive determinant,
 and $\kappa \in \bfZ_+$ we define the slash operator as
\begin{equation*}
(f|_{\k} \gamma)(z) = \frac{\det(\gamma)^{\kappa/2}}{(cz+d)^\kappa} f\left(\frac{az+b}{cz+d}\right).
%\det(\gamma)^{\k/2} (cz+d)^{-\k} f\left(\frac{az+b}{cz+d}\right).
\end{equation*}
If $\kappa$ is clear from the context we will simply write $f|\gamma$ instead of $f|_{\kappa}\gamma$.
We will write $S_{\kappa}(\Gamma_0(N))$ for the $\bfC$-space of cusp forms of weight $\kappa$ and level $\Gamma_0(N)$ (i.e., functions $f$ as above which satisfy $f|_{\kappa} \gamma = f$ for all $\gamma \in \Gamma_0(N)$ and vanish at the cusps - for details see \cite{Miyake89}).

The space $S_{\kappa}(\Gamma_0(N))$ is endowed with a natural inner product (the \emph{Petersson inner product}) defined by
\begin{equation*}
\langle f, g \rangle_{N} = \int_{\Gamma_0(N) \backslash \fh} f(z) \overline{g(z)} y^{\k-2} dx dy
\end{equation*}
for $z = x+iy$ with $x,y \in \bfR$ and $y>0$.
If $\Gamma \subset \Gamma_0(N)$ is a finite index subgroup we also set $$\langle f, g \rangle_{\Gamma} = \int_{\Gamma \backslash \fh} f(z) \overline{g(z)} y^{\k-2} dx dy.$$

From now on let $p$ be a prime which does not divide $N$. Set $\eta = \mat{p}{0}{0}{1}$. We have the decomposition
\begin{equation} \label{decomp3}
\Gamma_0(N) \mat{1}{0}{0}{p} \Gamma_0(N) = \bigsqcup_{j=0}^{p-1} \Gamma_0(N) \mat{1}{j}{0}{p} \sqcup \Gamma_0(N) \eta.
\end{equation}

Recall
the $p$th Hecke operator acting on $S_{\k}(\Gamma_0(N))$ is given by
\begin{equation*}
T_p f = p^{\k/2-1}\left( \sum_{j=0}^{p-1} f|_{\k}  \mat{1}{j}{0}{p} + f|_{\k} \eta\right)
\end{equation*}
and the $p$th Hecke operator acting on $S_{\k}(\Gamma_0(Np))$ is given by
\begin{equation*}
U_pf =  p^{\k/2-1}\left( \sum_{j=0}^{p-1} f|_{\k}  \mat{1}{j}{0}{p} \right).
\end{equation*}
As we will be viewing $f \in S_{\k}(\Gamma_0(N))$ as an element of $S_{\k}(\Gamma_0(Np))$, we use $T_p$ and $U_p$ to distinguish the two Hecke operators at $p$ defined above.

Let $f \in S_{\k}(\Gamma_0(N))$ be an eigenfunction for $T_p$ with eigenvalue $\lambda_f(p)$.  There exist (up to permutation) unique complex numbers $\alpha_p$ and $\beta_p$ satisfying $\lambda_f(p) = \alpha_p + \beta _p$ and $\alpha_p\beta_p=p^{\kappa-1}$. We consider the following two forms:
\begin{align*}
f_{p}^{\alpha_p}(z) &= f(z) - \beta_{p} p^{-\k/2} (f|_{\kappa}\eta)(z),\\
f_{p}^{\beta_p}(z) &= f(z) - \alpha_{p} p^{-\k/2} (f|_{\kappa}\eta)(z).
\end{align*}
One immediately obtains that $f_p^{\alpha_p} \in S_{\k}(\Gamma_0(Np))$ and that $f_p^{\alpha_p}$ is an eigenfunction for the operator $U_p$ with eigenvalue $\alpha_p$. Furthermore,
if $f$ is also an eigenform for $T_{\ell}$ for a prime $\ell \neq p$, then so is $f_p^{\alpha_p}$ and it has the same $T_{\ell}$-eigenvalue as $f$.
The analogous statements for $f_{p}^{\beta_p}$ hold as well.
Note that if $f$ is ordinary at $p$, then one can choose $\alpha_p$ and $\beta_p$ so that $\ord_p(\alpha_p) = 0$ and then $f_p^{\alpha_p}$ is the $p$-stabilized newform associated to $f$, see \cite{WilesInventMath88} for example.

\begin{thm} \label{classform} Let $f \in S_{\k}(\Gamma_0(N))$ be defined as above, where $p \nmid N$.  We have
\begin{equation*}
 \frac{\langle U_p f, U_p f \rangle_{Np}}{\langle f, f \rangle_{Np}} = p^{\k-2} + \frac{(p-1) \lambda_{f}(p)^2}{p+1}
 \end{equation*}
 and
\begin{equation*}
\frac{\langle f_p^{\alpha_p}, f_p^{\alpha_p} \rangle_{Np}}{\langle f, f \rangle_{Np}} =  \frac{p}{p+1}\left(1 - \frac{\alpha_p^2}{p^\kappa}\right)\left(1 - \frac{\beta_p^2}{p^\kappa}\right).
 \end{equation*}
 \end{thm}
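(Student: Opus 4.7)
I would reduce both identities to the computation of the $2\times 2$ Petersson Gram matrix of the pair $\{f,\,g\}\subset S_\kappa(\Gamma_0(Np))$, where $g(z):=f(pz)=p^{-\kappa/2}(f|\eta)(z)$. From \eqref{decomp3} combined with $T_pf=\lambda_f(p)f$ one immediately reads off $U_pf=\lambda_f(p)f-p^{\kappa-1}g$, while by definition $f_p^{\alpha_p}=f-\beta_p g$. Expanding the two Petersson squares thus reduces the problem to computing the three quantities $\|f\|^2_{Np}$, $\|g\|^2_{Np}$, and the cross term $A:=\langle f,g\rangle_{Np}$.

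The diagonal entries are straightforward. First, $\|f\|^2_{Np}=(p+1)\|f\|^2_N$ from the index $[\Gamma_0(N):\Gamma_0(Np)]=p+1$. Second, $\|g\|^2_{Np}=p^{-\kappa}\|f\|^2_{Np}$ follows from the $\GL_2^+(\bfR)$-equivariance $\langle f|\eta,f|\eta\rangle_{\Gamma_0(Np)}=\langle f,f\rangle_{\eta\Gamma_0(Np)\eta^{-1}}$ together with the matrix identity $\eta\Gamma_0(Np)\eta^{-1}\cap\SL_2(\bfZ)=\Gamma_0(N)\cap\Gamma^0(p)$; the latter has index $p+1$ in $\Gamma_0(N)$ when $p\nmid N$, since both subgroups are pullbacks of opposite Borel subgroups of $\SL_2(\bfF_p)$ under reduction mod $p$.

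The heart of the argument is the computation of $A$, which I expect to be the main obstacle. My plan is to unfold the level-$Np$ integral using a system of representatives $\Gamma_0(N)=\bigsqcup_i\Gamma_0(Np)\tau_i$: on each translated subdomain $\tau_i F$ (with $F$ a fundamental domain for $\Gamma_0(N)$), substitute $z=\tau_i w$ and use $f|\tau_i=f$ to collapse the integral to
\[
A=\bigl\langle f,\;\textstyle\sum_i g|\tau_i\bigr\rangle_{N}.
\]
Since $g=p^{-\kappa/2}(f|\eta)$, the inner sum equals $p^{-\kappa/2}\sum_i f|(\eta\tau_i)$, and the key observation is that as $\tau_i$ ranges over $\Gamma_0(Np)\backslash\Gamma_0(N)$ the products $\eta\tau_i$ form a complete system of left $\Gamma_0(N)$-coset representatives for the double coset appearing in \eqref{decomp3} (the verification uses $\eta^{-1}\Gamma_0(N)\eta\cap\SL_2(\bfZ)=\Gamma_0(Np)$). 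Hence $\sum_i f|(\eta\tau_i)=p^{1-\kappa/2}T_pf=p^{1-\kappa/2}\lambda_f(p)f$, yielding $A=p^{1-\kappa}\lambda_f(p)\|f\|^2_N=\lambda_f(p)\|f\|^2_{Np}/((p+1)p^{\kappa-1})$.

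Substituting the three ingredients back into the Petersson-square expansions gives the first identity of the theorem directly, after simplification using $\lambda_f(p)=\alpha_p+\beta_p$. For the second identity one uses in addition the Deligne bound $|\beta_p|^2=p^{\kappa-1}$ (so that $1+|\beta_p|^2/p^\kappa=(p+1)/p$ and $\re\beta_p=\lambda_f(p)/2$) together with the purely algebraic identity
\[
\frac{p+1}{p}-\frac{\lambda_f(p)^2}{(p+1)p^{\kappa-1}}=\frac{p}{p+1}\Bigl(1-\frac{\alpha_p^2}{p^\kappa}\Bigr)\Bigl(1-\frac{\beta_p^2}{p^\kappa}\Bigr),
\]
which is immediate from $\alpha_p\beta_p=p^{\kappa-1}$ and $\alpha_p+\beta_p=\lambda_f(p)$. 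Naive attempts to isolate $A$ by pairing the $T_p$-eigenvalue equation against either $f$ or $g$ at level $Np$ reproduce only tautologies, so it is the coset unfolding together with the bijection above that makes the cross-term calculation go through.
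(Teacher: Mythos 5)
Your proposal is correct and follows essentially the same route as the paper: expand the Petersson squares of $f-\beta_p g$ and of $U_pf=\lambda_f(p)f-p^{\kappa-1}g$, reduce everything to the diagonal terms and the cross term $\langle f, f|\eta\rangle_{Np}$, evaluate that cross term via the double coset decomposition (\ref{decomp3}) together with the $T_p$-eigenvalue equation, and finish with $\overline{\alpha_p}=\beta_p$ and $|\beta_p|^2=p^{\kappa-1}$. The only (cosmetic) difference is that you get the cross term by unfolding the level-$Np$ integral to level $N$ and recognizing $\sum_i f|(\eta\tau_i)$ as $p^{1-\kappa/2}T_pf$, whereas the paper derives the equivalent identity $\langle T_pf,f\rangle_{Np}=(p+1)p^{\kappa/2-1}\langle f|\eta,f\rangle_{Np}$ by moving each coset representative onto $\eta$ with elements of $\Gamma_0(N)$.
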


\begin{proof}
The definition of $f_{p}^{\alpha_p}$
and the fact
 that $U_p f = T_{p} f - p^{\k/2-1}f|\eta$
immediately give
\begin{align*}
\langle f_p^{\alpha_p}, f_p^{\alpha_p} \rangle_{Np} &=
  (1 + |\beta_{p}|^2 p^{-\k}) \langle f, f \rangle_{Np} - p^{-\k/2}(\beta_{p} \langle f| \eta, f \rangle_{Np} + \ov{\beta_{p} \langle f|\eta, f\rangle}_{Np})
 \end{align*}
and
\begin{align*}
\langle U_p f, U_p f \rangle_{Np}
    &= (p^{\k-2} + \lambda_{f}(p)^2) \langle f, f \rangle_{Np} -   p^{\k/2-1}\lambda_{f}(p) (\langle f|\eta, f \rangle_{Np} + \ov{\langle f|\eta, f\rangle}_{Np}).
\end{align*}

Let us now compute $\langle f|\eta, f \rangle_{Np}$.
Observe that by the definition of $T_p$ we have
\begin{equation*}
\langle T_p f, g \rangle_{Np} =  p^{\k/2-1}\left( \sum_{j=0}^{p-1} \left \langle f\mid \mat{1}{j}{0}{p}, g\right \rangle_{Np} + \langle f|\eta, g \rangle_{Np}\right).
\end{equation*}
Using the decomposition (\ref{decomp3})
we can find $a_{j}, b_{j} \in \Gamma_0(N)$ so that $a_{j} \mat{1}{j}{0}{p} b_{j} = \mat{1}{0}{0}{p}$, and $a,b \in \Gamma_0(N)$ so that
$a \mat{1}{0}{0}{p} b = \mat{p}{0}{0}{1}.$
Using this and the fact that $f, g \in S_{\k}(\Gamma_0(N))$, we have
\begin{align*}
p^{1-\k/2} \langle  T_p f,g \rangle_{Np}% &= \sum_{j=0}^{p-1} \left \langle f\mid \mat{1}{j}{0}{p}, g \right \rangle_{Np} + \langle f|\eta, g \rangle_{Np} \\
    &= \sum_{j=0}^{p-1} \left \langle f\mid a_{j} \mat{1}{j}{0}{p}, g|b_{j}^{-1} \right \rangle_{Np} + \langle f|\eta, g \rangle_{Np} \\
    &=\sum_{j=0}^{p-1} \left \langle f\mid a_{j}\mat{1}{j}{0}{p} b_{j}, g \right \rangle_{Np} + \langle f|\eta, g \rangle_{Np} \\
    %&= \sum_{j=0}^{p-1} \left \langle f\mid \mat{1}{0}{0}{p}, g \right \rangle_{Np} + \langle f|\eta, g \rangle_{Np} \\
    &= p \left \langle f\mid \mat{1}{0}{0}{p}, g \right \rangle_{Np} + \langle f|\eta, g \rangle_{Np} \\
    &= p \left \langle f\mid a \mat{1}{0}{0}{p}, g|b^{-1} \right \rangle_{Np} + \langle f|\eta, g \rangle_{Np}\\
    %&= p  \langle f|\eta, g\rangle_{Np} + \langle f|\eta, g \rangle_{Np} \\
    &= (p+1) \langle f|\eta, g \rangle_{Np}.
 \end{align*}
Thus, setting $g = f$ we obtain
 \begin{equation*}
 \langle f|\eta, f \rangle_{Np} =  p^{1-\k/2}\frac{\lambda_{f}(p)}{p+1} \langle f, f \rangle_{Np}.
 \end{equation*}
 We can now easily conclude that
 \begin{equation*}
 \frac{\langle f_p^{\alpha_p}, f_p^{\alpha_p} \rangle_{Np}}{\langle f, f \rangle_{Np}} = 1 + |\beta_{p}|^2 p^{-\k} - p^{1-\k}(\beta_{p}+\ov{\beta_p})\frac{\lambda_{f}(p)}{p+1}
 \end{equation*}
and
 \begin{equation*}
 \frac{\langle U_p f, U_p f \rangle_{Np}}{\langle f, f \rangle_{Np}} = p^{\k-2} + \frac{(p-1) \lambda_{f}(p)^2}{p+1}.
 \end{equation*}

Using the fact
that $T_p$ is self-adjoint with respect to the Petersson inner
product we have $\alpha_p + \beta_p = \lambda_f(p) \in \bfR$.
We note by Lemma \ref{Ramanujan} below that $\ov{\alpha_{p}} = \beta_{p}$.  Thus $|\alpha_p|^2=|\beta_p|^2 = |\alpha_p||\beta_p|=p^{\kappa-1}$ and $\beta_p + \ov{\beta_p}=\lambda_f(p)$. This allows us to simplify the formula for $ \frac{\langle f_p^{\alpha_p}, f_p^{\alpha_p} \rangle_{Np}}{\langle f, f \rangle_{Np}}$ to
 \begin{equation*}
 \frac{\langle f_p^{\alpha_p}, f_p^{\alpha_p} \rangle_{Np}}{\langle f, f \rangle_{Np}} = 1 + \frac{1}{p} - p^{1-\k}\frac{\lambda_{f}(p)^2}{p+1}.
 \end{equation*}
Again using that $\lambda_f(p) = \alpha_p+\beta_p$ we obtain
\begin{align*}
\frac{\langle f_p^{\alpha_p}, f_p^{\alpha_p} \rangle_{Np}}{\langle f, f \rangle_{Np}} &= \frac{1}{p+1}\left(p+ 1+\frac{p+1}{p} - p^{1-\kappa}(\alpha_p^2+\beta_p^2 + 2p^{\kappa-1})\right)\\
&= \frac{p}{p+1}\left(1-\frac{\alpha_p^2}{p^{\kappa}}\right)\left(1-\frac{\beta_p^2 }{p^{\kappa}}\right).
\end{align*}
\end{proof}

\begin{cor}  We have
\begin{equation*}
\lim_{\substack{p\to \infty\\ p\hs\textup{is prime}}} \frac{\langle f_p^{\alpha_p}, f_p^{\alpha_p} \rangle_{Np}}{p+1}=\langle f, f \rangle_N.
\end{equation*} \end{cor}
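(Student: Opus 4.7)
The plan is to combine the ratio formula from Theorem \ref{classform} with the behavior of the Petersson inner product under passage from level $\Gamma_0(N)$ to level $\Gamma_0(Np)$, and then let $p \to \infty$.

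First I would record the covolume relation. Since the Petersson inner product in this paper is the \emph{unnormalized} integral over a fundamental domain, if $\Gamma \subset \Gamma_0(N)$ is a subgroup of finite index then, for $f,g \in S_\kappa(\Gamma_0(N))$ viewed as forms of level $\Gamma$, we have
\begin{equation*}
\langle f,g\rangle_\Gamma = [\Gamma_0(N):\Gamma]\,\langle f,g\rangle_N,
\end{equation*}
since the $\Gamma$-fundamental domain tiles that of $\Gamma_0(N)$ with $[\Gamma_0(N):\Gamma]$ copies. Applied to $\Gamma=\Gamma_0(Np)$ with $p\nmid N$, the index equals $[\SL_2(\bfZ):\Gamma_0(p)] = p+1$ (reduction mod $p$ is surjective on $\Gamma_0(N)$ because $p\nmid N$, using the Chinese remainder theorem to lift an arbitrary element of $\SL_2(\bfF_p)$). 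Thus
\begin{equation*}
\langle f,f\rangle_{Np} = (p+1)\,\langle f,f\rangle_N.
\end{equation*}

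Second, I would substitute this into the second identity of Theorem \ref{classform}. That yields
\begin{equation*}
\frac{\langle f_p^{\alpha_p}, f_p^{\alpha_p}\rangle_{Np}}{p+1}
= \langle f,f\rangle_N \cdot \frac{p}{p+1}\left(1-\frac{\alpha_p^2}{p^\kappa}\right)\left(1-\frac{\beta_p^2}{p^\kappa}\right).
\end{equation*}

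Third, I would use the Ramanujan-type identity $|\alpha_p|=|\beta_p|=p^{(\kappa-1)/2}$ already invoked in the proof of Theorem \ref{classform} (via Lemma \ref{Ramanujan}, with $f$ of level $N$ and $p\nmid N$). This gives $|\alpha_p^2/p^\kappa| = |\beta_p^2/p^\kappa| = 1/p$, so both factors $(1-\alpha_p^2/p^\kappa)$ and $(1-\beta_p^2/p^\kappa)$ tend to $1$ as $p\to\infty$, while $p/(p+1)\to 1$. Taking the limit on both sides gives the claim.

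There is no real obstacle here; the only point requiring any care is the normalization of the Petersson inner product and the index computation $[\Gamma_0(N):\Gamma_0(Np)]=p+1$, which uses $p\nmid N$.
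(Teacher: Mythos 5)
Your proof is correct and follows essentially the same route as the paper: apply the second formula of Theorem \ref{classform}, use $\langle f,f\rangle_{Np}=(p+1)\langle f,f\rangle_N$, and let $p\to\infty$ using $|\alpha_p|^2=|\beta_p|^2=p^{\kappa-1}$. The only difference is that you spell out the index computation $[\Gamma_0(N):\Gamma_0(Np)]=p+1$, which the paper takes for granted.
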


\begin{proof}Using Theorem \ref{classform} and the fact that  $\langle f,f\rangle_{Np} = (p+1)\langle f, f\rangle_N$, we have for every prime $p \nmid N$ that $$\frac{\langle f_p^{\alpha_p}, f_p^{\alpha_p} \rangle_{Np}}{p+1}= \frac{p}{p+1}\left(1-\frac{\alpha_p^2}{p^{\kappa}}\right)\left(1-\frac{\beta_p^2 }{p^{\kappa}}\right)\langle f, f \rangle_{N}.$$ Since $|\alpha_p|^2=|\beta_p|^2=p^{\kappa-1}$, we see that the first three factors on the right tend to 1 as $p$ tends to infinity.
 \end{proof}

\section{Relation between the classical and adelic inner products} \label{relations}

While the classical calculations for $\langle U_p f, U_p f \rangle$ are rather elementary, it is also useful to note that one can perform these calculations adelically.  The problem of calculating $\langle U_p f, U_p f \rangle$ is one that is local in nature, so it lends itself nicely to such an approach. Moreover, in a higher genus setting such as when working with Siegel modular forms, it is the adelic approach that generalizes most readily \cite{BrownKlosinGSp(4)}.  In this section we provide the necessary background relating the adelic and classical inner products that is needed to relate the adelic inner product calculated in Section \ref{sec:adeliccalc} to the calculation given in the previous section.

In this and the following sections $p$ will denote a prime number and $v$ will denote an arbitrary place of $\bfQ$ including the Archimedean one, which we will denote by $\infty$.
%\com{Maybe could say ``see for example'' - this is not the original source for that \\J: That is fine with me.}
Let $G=\GL_2$ and fix $N \geq 1$.   By strong approximation (see for example
 \cite[Theorem 3.3.1, p. 293]{BumpCambridgeUniversityPress88}) we have
\begin{equation}\label{strapp}
G(\AQ) = G(\bfQ) G(\bfR) \prod_{p} K_{p},
 \end{equation}
where $K_p$ is a compact subgroup of $G(\bfQ_p)$ such that $\det K_p = \bfZ_p^{\times}$. One example would be to take  $K_{p} = K_{0}(N)_p$, where
\begin{equation*}
K_0(N)_p = \left\{ \bmat a& b \\ c& d \emat \in G(\bfZ_{p}): c \equiv 0 \bmod{N}\right\}.
\end{equation*}
Note that $K_0(N)_p=G(\bfZ_p)$ if $p \nmid N$. We will also set $$K_0(N):= \left\{ \bmat a& b \\ c& d \emat \in G(\hat{\bfZ}): c \equiv 0 \bmod{N}\right\}=\prod_p K_0(N)_p.$$
The decomposition (\ref{strapp}) implies
%\com{K: I think this doesn't need a reference: \cite[p. 295]{BumpCambridgeUniversityPress88} \\J: Good with me.}
 that
\be \label{2}
G(\bfQ) \setminus G(\AQ) =  G^+(\bfR) \prod_{p} K_{p},
\ee
where $+$
indicates positive determinant.

Let $Z \subset G$ denote the center.  For every $p$ there is a unique Haar measure $dg_p$ on $G(\bfQ_p)$ normalized so that the volume of any maximal compact subgroup of $G(\bfQ_p)$ is one.  We use the standard Haar measure on $G(\bfR)$ as defined in \cite[$\S$ 2.1]{BumpCambridgeUniversityPress88}.   Define the
 adelic analogue of the Petersson inner product:
\begin{equation*}
\langle \phi_1, \phi_2 \rangle = \int_{Z(\AQ) G(\bfQ) \setminus G(\AQ)} \phi_1(g) \ov{\phi_2(g)} dg,
 \end{equation*}
where $\phi_1$ and  $\phi_2$ lie in $L^2(Z(\AQ) G(\bfQ) \setminus G(\AQ))$ and have the same central character and $dg$ is
the Haar measure on $Z(\AQ) G(\bfQ) \setminus G(\AQ)$ corresponding to our choice of local Haar measures.

Let $f \in S_{\kappa}(\Gamma_0(N))$ be an eigenform.  For $g=\gamma g_{\infty} k\in G(\AQ)$ with $\gamma \in G(\bfQ)$, $g_{\infty}=\bmat a&b\\c&d\emat \in G^+(\bfR)$ and $k \in K_0(N)$, set \be \label{adelicdef}
 \phi_f(g)=\frac{(\det g_\infty)^{\kappa/2}}{(ci+d)^\kappa}f(g_\infty{i}).\ee
Then $\phi_f$ is an automorphic form on $G(\AQ)$ and it is easy to see (using the bijection in \cite[Equation 5.13]{GelbartAFAG}) that one has
\begin{equation}\label{eqn:classicalrelation}
\langle \phi_f, \phi_{f} \rangle = \frac{1}{\left[\SL_2(\bfZ):\Gamma_0(N)\right]} \langle f, f \rangle_{N}.
\end{equation}

%\com{K: Please, see how I reworded the stuff below - I still kept the constant, because it fits well with the narrative in the next section, i.e., we leave ourselves the freedom to choose any inner product.\\J: I think this looks good!}
Let $\pi_{f} \cong \otimes \pi_{f,v}$ be the automorphic representation generated
by $\phi_{f}$. If $f$ is a newform, then we can write $\phi_f= \otimes_v \phi_{f,v}$ for $\phi_{f,v} \in \pi_{f,v}$ and $\phi_{f,v}$ are spherical vectors for all $v \nmid N$, $v \neq \infty$. For every $v$ we can choose a $G(\bfQ_v)$-invariant inner product $\langle\cdot, \cdot\rangle_v$ (and any two such are scalar multiples of each other)
%\com{K: I think we need that statement, right? Here is a proof that I was able to come up with (but does it work for infinite dimensional vector spaces? Do you know a reference for a better proof?) Suppose you have two inner products. They give two (isomorphisms? If not, we get a map to double dual) $\phi_i: V \to V^*$. So, we can consider the composition $\phi_2^{-1} \circ \phi_1 : V \to V$. This is just scalar multiplication by Schur's lemma, because $V$ is an irreducible representation (the local ones are all irreducible - see Thm. 6.1 in the notes from Brian's seminar (Trotabas).\\J: These are all infinite dimensional spaces correct?  In that case one only has an injection of $V$ into its dual; the dual is much larger than $V$ so we can't define the inverse map from $V^{\vee}$ back to $V$.  We could define the inverse on $\IM(\phi_1) \cap \IM(\phi_2)$, but I don't immediately see any reason that has to be nontrivial. Can we just use Corollary 8.1.11 in Goldfeld?  It is to deal with contragredients, but I don't see why it doesn't apply for our case as well.}
so that $\langle \phi_{f,v}, \phi_{f,v}\rangle_v=1$ for all $v \nmid N$, $v \neq \infty$.
% and  \begin{equation}\label{eqn:factorization}\langle \cdot, \cdot \rangle = \prod_v \langle \cdot, \cdot \rangle_v.  \end{equation}
 It follows that there is constant $c$
%\com{K: This constant now I think depends on the choice of the local inner products away from $N\infty$ - the others have been fixed by demanding that they pair the spherical vectors to one and also on the choice of the isomorphism in the tensor product theorem if we no longer insist that it is an isomorphism of Hilbert spaces, but I think we should mention none of that in the paper\\J: agreed}
so that \begin{equation}\label{eqn:factorization}\langle \phi_{f}, \phi_{f}\rangle = c \prod_{v} \langle \phi_{f,v}, \phi_{f,v} \rangle_{v}. \end{equation} %where the local pairings are chosen so that $\langle \phi_{f,v}, \phi_{f,v} \rangle_v = 1$ for almost all spherical vectors $\phi_{f,v}$. The precise value of this constant is not important for our purposes as we will be taking a ratio of inner products.  (One can see \cite[Section 3]{Qiu} for an example of a calculation of this constant.)
 We are now in a position to relate the ratio $\frac{\langle U_p f, U_p f \rangle_{Np}}{\langle f, f \rangle_{Np}}$ to something that can be calculated locally. In fact since we are only interested in this ratio, the precise value of the constant $c$ in (\ref{eqn:factorization}) will be irrelevant.

Fix $p \nmid N$.  As noted above, we normalize our Haar measure so that $\vol(K_0(1)_p) = 1$. For a vector $v_p$ inside the space of $\pi_{f,p}$, we set
\begin{equation*}
T_p v_p = \int_{K_0(1)_p \bsmat p&0 \\ 0& 1 \esmat K_0(1)_p} \pi_{f,p}(g) v_{p} dg
\end{equation*}
and
\begin{equation*}
 V_p v_{p} = \int_{\bsmat 1 & 0 \\ 0 & p \esmat K_0(1)_p} \pi_{f,p}(g) v_{p} dg.
 \end{equation*}
 Note that we have the decompositions
\be
\label{dec} K_0(1)_p \bmat p&0 \\ 0& 1 \emat K_0(1)_p = \bigsqcup_{b=0}^{p-1} \bmat p & b \\ 0 & 1 \emat K_0(1)_p \sqcup \bmat 1&0 \\ 0& p\emat K_0(1)_p
\ee
and
\be
\label{dec1} K_0(p)_p \bmat p&0 \\ 0& 1 \emat K_0(p)_p= \bigsqcup_{b=0}^{p-1} \bmat p & b \\ 0 & 1 \emat K_0(p)_p.
\ee
The adelic operator corresponding to the $U_p$-operator acting on classical modular forms as defined in Section \ref{Relations between the norms} is given by
$$U_p v_{p} := T_p v_{p} - V_p v_{p}.$$

\begin{lemma} \label{classtoadel} We have
 \begin{equation*}
 \frac{\langle U_p^{\rm cl} f, U_p^{\rm cl} f \rangle_{Np}}{\langle f, f \rangle_{Np}} = p^{\k-2}\langle U_p \phi_{f,p}, U_p \phi_{f,p} \rangle_{p}
 \end{equation*}
 for any local inner product pairing $\langle , \rangle_{p}$ so that $\langle \phi_{f,p}, \phi_{f,p} \rangle_{p} = 1$ and $U_p^{\rm cl}$ is the classical $U_p$-operator as defined in Section \ref{Relations between the norms}.
 \end{lemma}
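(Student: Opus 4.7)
The plan is to reduce the classical ratio on the left-hand side to a ratio of adelic inner products via (\ref{eqn:classicalrelation}), and then to exploit the Euler factorization (\ref{eqn:factorization}) together with a local identification between the classical operator $U_p^{\rm cl}$ and the adelic operator $U_p$. Since $\Gamma_0(Np) \subset \Gamma_0(N)$, both $\phi_f$ and $\phi_{U_p^{\rm cl} f}$ are $K_0(Np)$-invariant, so applying (\ref{eqn:classicalrelation}) with $N$ replaced by $Np$ in numerator and denominator produces
$$\frac{\langle U_p^{\rm cl} f, U_p^{\rm cl} f\rangle_{Np}}{\langle f, f\rangle_{Np}} = \frac{\langle \phi_{U_p^{\rm cl} f}, \phi_{U_p^{\rm cl} f}\rangle}{\langle \phi_f, \phi_f\rangle}.$$

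The heart of the argument is the identity
\begin{equation}\label{keyid}
\phi_{U_p^{\rm cl} f} \;=\; p^{\kappa/2-1}\,\widetilde{U}_p\,\phi_f,
\end{equation}
where $\widetilde{U}_p$ denotes the operator acting as $U_p$ on the $p$-component of the factorization $\phi_f = \otimes_v \phi_{f,v}$ and trivially elsewhere. To establish (\ref{keyid}), I would first use the decomposition (\ref{dec}) together with $\vol(K_0(1)_p)=1$ and the sphericity of $\phi_{f,p}$ to simplify
$$U_p \phi_{f,p} = \sum_{b=0}^{p-1}\pi_{f,p}\!\left(\mat{p}{b}{0}{1}\right)\phi_{f,p}.$$
Next, by strong approximation it suffices to verify (\ref{keyid}) pointwise on $g_\infty \in G^+(\bfR)$, viewed as an adele trivial at all finite places. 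For each summand, I would write $g_\infty \cdot m_b$ (where $m_b$ is the adele equal to $\mat{p}{b}{0}{1}$ at $p$ and trivial elsewhere) as $\gamma\, g'_\infty\, k$ with $\gamma = \mat{p}{b}{0}{1}\in G(\bfQ)$ embedded diagonally and $k \in K_0(Np)$; a direct computation gives $g'_\infty\cdot i = (g_\infty\cdot i - b)/p$, $\det g'_\infty = (\det g_\infty)/p$, and the bottom row of $g'_\infty$ unchanged. Plugging these into (\ref{adelicdef}) and reindexing $b \mapsto -j \pmod p$ (using $f(z+1)=f(z)$) recovers the classical formula $U_p^{\rm cl} f(z) = p^{-1}\sum_{j=0}^{p-1}f((z+j)/p)$ up to the announced scalar $p^{\kappa/2-1}$.

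With (\ref{keyid}) in place, the factorization (\ref{eqn:factorization}) yields
$$\frac{\langle \phi_{U_p^{\rm cl} f}, \phi_{U_p^{\rm cl} f}\rangle}{\langle \phi_f, \phi_f\rangle} = p^{\kappa-2}\,\frac{\langle U_p\phi_{f,p}, U_p\phi_{f,p}\rangle_p}{\langle \phi_{f,p}, \phi_{f,p}\rangle_p} = p^{\kappa-2}\langle U_p\phi_{f,p}, U_p\phi_{f,p}\rangle_p,$$
since the indeterminate constant $c$ of (\ref{eqn:factorization}) and all local factors for $v\neq p$ cancel in the ratio, while the second equality uses the normalization $\langle \phi_{f,p}, \phi_{f,p}\rangle_p = 1$. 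Combined with the reduction above, this gives the lemma.

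The main obstacle is the accurate bookkeeping of scalars in the proof of (\ref{keyid}): one must reconcile the $p^{\kappa/2-1}$ built into the definition of $U_p^{\rm cl}$, the automorphy factor $(\det g'_\infty)^{\kappa/2}/(c_\infty i + d_\infty)^\kappa$ coming from (\ref{adelicdef}), and the $p^{-\kappa/2}$ arising from $\det g'_\infty = (\det g_\infty)/p$, so that they combine to yield precisely the exponent $\kappa-2$ on the right. Once the local identification (\ref{keyid}) is handled, the rest is a formal consequence of the Euler factorization together with the cancellation of the local factors away from $p$.
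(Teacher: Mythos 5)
Your argument is correct and follows essentially the same route as the paper: the key identity $\phi_{U_p^{\rm cl}f}=p^{\kappa/2-1}U_p\phi_f$ is exactly the paper's equation (\ref{adelclas}), after which the lemma falls out of (\ref{eqn:classicalrelation}) and (\ref{eqn:factorization}) just as you describe. The only difference is that you carry out the strong-approximation/pointwise verification of (\ref{adelclas}) explicitly (with the correct bookkeeping of the scalars), whereas the paper delegates this to the argument of \cite[Lemma 3.7]{GelbartAFAG}.
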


\begin{proof} If we set $U_p \phi_f := (U_p \phi_{f,p}) \otimes \otimes_{v \neq p} \phi_{f,v}$ then it follows by the same argument as the one in the proof of \cite[Lemma 3.7]{GelbartAFAG} that
 \be \label{adelclas}
 \phi_{U_p^{\rm cl}f}=p^{\kappa/2-1}U_p \phi_f.
 \ee
%\com{K: The calculations are good for us, but for the official version of the paper, we can just probably label the equation after (3.3) as (something) and then replace the remainder of the proof by saying: ``''}
% We have
%\begin{align*}
%\frac{ \langle U_p^{\rm cl} f, U_p^{\rm cl} f\rangle_{Np}}{\langle f, f \rangle_{Np}} &= \frac{\langle \phi_{U_p^{\rm cl} f}, \phi_{U_p^{\rm cl} f} \rangle}{\langle \phi_f, \phi_{f} \rangle} \\
%    &= \frac{ p^{\k-2} \langle U_p \phi_{f,p}, U_p \phi_{f,p} \rangle_{p}}{\langle \phi_{f,p}, \phi_{f,p} \rangle_{p}}\\
%    &= p^{\k-2} \langle U_p \phi_{f,p}, U_p \phi_{f,p} \rangle_{p}
%\end{align*}
The lemma is now immediate from (\ref{eqn:classicalrelation}) and   (\ref{eqn:factorization}).
\end{proof}

It only remains to calculate $\langle U_p \phi_{f,p}, U_p \phi_{f,p} \rangle$, which is done in the next section.

\section{Local calculation of $\langle U_p \phi_{f,p}, U_p \phi_{f,p} \rangle$}\label{sec:adeliccalc}

We will now give a calculation that, when combined with the results of the previous section, provides a local way to calculate $\langle U_p f, U_p f \rangle$ in terms of $\langle f, f \rangle$.  As in the previous section we fix $f \in S_{k}(\Gamma_0(N))$ a newform and a prime $p$ not dividing $N$.  We again let $\pi_{f} = \otimes_{v} \pi_{f,v}$ be the automorphic representation associated to $f$. Note that since $p \nmid N$, we can take the principal series representation  $\pi_{p}(\chi_1, \chi_2)$ to be the model for $\pi_{f,p}$ and for functions $\psi, \psi' \in \pi_p(\chi_1, \chi_2)$ define the local inner product by $$\langle \psi, \psi' \rangle_p := \int_{K_0(N)_p} \psi(g) \ov{\psi'(g)}dg,$$ where the Haar measure is normalized so that $\vol(K_0(N)_p)=1$. Then the vector $\phi_{f,p}\in \pi_p$ corresponds to the function (which we will also denote by $\phi_{f,p} \in \pi_p(\chi_1,\chi_2)$) which can be described explicitly as $$\phi_{f,p}\left( \bmat a &* \\ 0&b \emat k\right) = \chi_1(a) \chi_2(b)|ab^{-1}|_p^{1/2},$$
where $|\cdot|_p$ denotes the standard $p$-adic norm ($|p|_p=p^{-1}$) and $k \in K_0(N)_p$.

As this section is focused on the calculation of $\langle U_p \phi_{f,p}, U_p \phi_{f,p} \rangle$,
we will from now on write $\phi$ for $\phi_{f,p}$ and $K_0(1)$ (resp., $K_0(p)$) for $K_0(1)_p$ (resp., $K_0(p)_p$).  %The calculation provided in this section can also be performed via matrix coefficients, see \cite[$\S$ 4.6]{BumpCambridgeUniversityPress88} for example.

\begin{rem} We note here that the calculation which follows can also be performed using the MacDonald formula for matrix coefficients (see \cite[$\S$ 4]{CasselmanCompMath80}). However, in the relatively simple case of $\GL_2$  the elementary approach which we present below does not add any computational difficulty and is perhaps more transparent. \end{rem}

Set $$\mB:= \left\{ \bmat p & b \\ 0 & 1 \emat : b\in \{0,1, \dots, p-1\}\right\}, \quad \mB':= \mB \cup \left\{ \bmat 1&0 \\ 0&p \emat \right\}.$$
If $g \in K_0(1)$ and $\beta \in \mB'$, there is a permutation $\sigma_{g}$ of $\mB'$ and elements $k(g,\beta) \in K_0(1)$ such that $g \beta = \sigma_{g}(\beta) k(g, \beta)$. Furthermore, note that if $g \in K_0(1) - K_0(p)$, then the corresponding permutation cannot fix  $\bmat 1&0 \\ 0& p \emat$. This implies that for such a $g$, there exists $\beta \in \mB$ such that $\sigma_{g}(\beta) = \bmat 1&0 \\ 0& p \emat$. Since in the computation of $U_p\phi$ only matrices in $\mB$ are used, we are interested in the restriction of $\sigma$ to $\mB$.
For such a $g$ there are $p-1$ matrices in the image of $\sigma_{g}$
%\com{K: replaced `which have $(1,p)$ on the diagonal and one that  has $(p,1)$ on the diagonal.' with the following:}
which have $(p,1)$ on the diagonal and one that  has $(1,p)$ on the diagonal.
   Set $\mB_1(g) = \{\beta \in \mB: \sigma_{g}(\beta) \in \mB\}$ and $\mB_2(g) = \{\beta \in \mB: \sigma_{g}(\beta) \in \mB' - \mB\}$.  So for $g \in K_0(1) - K_0(p)$  we have (note that our $\phi$ is right-$K_0(1)$-invariant and $\vol(K_0(1))=1$)
%\com{K: below removed what used to be the middle equality}
\begin{equation*}
\begin{split} (U_p\phi)(g) &= \vol(K_0(1)) \sum_{\beta \in \mB} \phi(g\beta)\\
&= \sum_{\beta \in \mB} \phi(\sigma_{g}(\beta) k(g,\beta)) \\
%& =  \sum_{\beta \in \mB} \phi(\sigma_{g}(\beta)) \\\
&= \sum_{\beta \in \mB_1(g)} \phi(\sigma_{g}(\beta)) + \sum_{\beta \in \mB_2(g)} \phi(\sigma_{g}(\beta))\\
& =  (p-1)\chi_1(p) p^{-1/2} + \chi_2(p) p^{1/2}.
\end{split}
\end{equation*}
%Recall that $vol(K_0(p))=(p+1)^{-1}$.
If $g \in K_0(p)$, then the permutation $\sigma$ %\com{K: replaced: `fixes $\bmat p \\ & 1 \emat$,' with:}
fixes $\bmat 1&0 \\0 & p \emat$, hence we obtain
$$(U_p\phi)(g) = \vol(K_0(1)) \sum_{\beta \in \mB} \phi(g\beta) = \sum_{\beta \in \mB} \phi(\sigma(\beta))=p\chi_1(p)p^{-1/2} = \chi_1(p)p^{1/2}.$$
Now let us compute the integral:
%\com{K: removed the middle step here:}
$$ \left< U_p \phi, U_p \phi \right>_{K_0(1)} = \int_{K_0(p)} U_p\phi(g) \ov{U_p\phi(g)} dg + \int_{K_0(1) - K_0(p)} U_p\phi(g) \ov{U_p\phi(g)} dg.$$
%\be\begin{split} \left< U_p \phi, U_p \phi \right>_{K_0(1)} & = \int_{K_0(1)} U_p\phi(g) \ov{U_p\phi(g)} dg\\&= \int_{K_0(p)} U_p\phi(g) \ov{U_p\phi(g)} dg + \int_{K_0(1) - K_0(p)} U_p\phi(g) \ov{U_p\phi(g)} dg.\end{split}\ee

 %For a function $\phi$ which is right invariant under $K_0(p)$ Jim defined:
%$$(U_p\phi)(g) = \int_{K_0(p) \bmat p \\ & 1 \emat K_0(p)} \phi(gh) dh.$$ Using the decomposition of   he computes:
%$$(U_p\phi)(g) = vol(K_0(p)) \sum_{b=0}^{p-1}\phi\left(g \bmat p&b\\ 0&1\emat\right).$$
%and for $g \in K_0(p)$ and he obtains:
%$$(U_p\phi)(g) = vol(K_0(p)) \chi_1(p)p^{1/2}.$$
%
%We will now compute the action of this operator for $g \in K_0(1) \setminus K_0(p)$. For this we will use the decomposition (\ref{dec}):
%\be \label{dec} K_0(1) \bmat p \\ & 1 \emat K_0(1) = \bigsqcup_{b=0}^{p-1} \bmat p & b \\ 0 & 1 \emat K_0(1) \sqcup \bmat 1&0 \\ 0& p\emat K_0(1).\ee

%\com{K: replaced: ``We now calculate the first integral:
%\be \label{j1}
%\begin{split} \int_{K_0(p)} U_p\phi(g) \ov{U_p\phi(g)} dg  &=  \int_{K_0(p)} p |\chi_1(p)|^2  dh \\
%&=\vol(K_0(p)) p |\chi_1(p)|^2 \\
%&= \frac{p |\chi_1(p)|^2}{p+1}.
%\end{split}
%\ee
%The second integral proceeds similarly:'' with:}
We have \be \label{j1}
\begin{split} \int_{K_0(p)} U_p\phi(g) \ov{U_p\phi(g)} dg  &=  \int_{K_0(p)} p |\chi_1(p)|^2  dh \\
&=\vol(K_0(p)) p |\chi_1(p)|^2 \\
&= \frac{p |\chi_1(p)|^2}{p+1}
\end{split}
\ee and, since $\vol(K_0(1) - K_0(p))=p/(p+1)$,
%\com{K: removed the middle step below and replaced it with the parenthetical comment above}
\begin{multline} \label{k1} \int_{K_0(1) - K_0(p)} U_p\phi(g) \ov{U_p\phi(g)} dg\\
= \int_{K_0(1)- K_0(p)} \left[ \frac{(p-1)^2}{p} |\chi_1(p)|^2 + (p-1)\tr(\chi_1(p) \ov{\chi_2(p)})+ p|\chi_2(p)|^2\right] dg \\
%= \vol(K_0(1) - K_0(p))\left[ \frac{(p-1)^2}{p} |\chi_1(p)|^2 + (p-1)\tr(\chi_1(p) \ov{\chi_2(p)})+ p|\chi_2(p)|^2\right]\\
= \frac{p}{p+1} \left[ \frac{(p-1)^2}{p} |\chi_1(p)|^2 + (p-1)\tr(\chi_1(p) \ov{\chi_2(p)})+ p|\chi_2(p)|^2\right].
\end{multline}
Putting (\ref{j1}) and (\ref{k1}) together we get
\be \label{for32}
\begin{split}
\left< U_p \phi, U_p \phi \right> &= \frac{p^2-p+1}{p+1} |\chi_1(p)|^2 + \frac{p^2}{p+1} |\chi_2(p)|^2+ \frac{p^2-p}{p+1} \tr (\chi_1(p)\ov{\chi_2(p)}). \end{split}
\ee

\begin{lemma} \label{Ramanujan} We have $\chi_j(p) = p^{s_j}$ for $j=1,2$, where $s_j$ is a purely imaginary number. In particular, $|\chi_{j}(p)|=1$ for $j=1,2$. Moreover, we have $\ov{\alpha_p} = \beta_{p}$.\end{lemma}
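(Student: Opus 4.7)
The plan has three stages: reduce the lemma to a statement about Satake parameters, identify those parameters with the classical Hecke parameters $\alpha_p,\beta_p$, and then invoke the Ramanujan--Petersson bound.

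Since $p \nmid N$, the local component $\pi_{f,p} = \pi_p(\chi_1,\chi_2)$ is unramified, so $\chi_1,\chi_2$ are unramified characters of $\bfQ_p^{\times}$. Every such character has the form $x \mapsto |x|_p^{-s}$, so one may immediately write $\chi_j(p) = p^{s_j}$ with $s_j$ defined modulo $2\pi i/\log p$. The content of the first part of the lemma is therefore just $|\chi_j(p)| = 1$.

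For the identification step, I would compute $(T_p\phi)(1)$ with $\phi := \phi_{f,p}$ and $T_p$ the adelic Hecke operator from Section~\ref{relations}, using the decomposition (\ref{dec}) together with the explicit formula for $\phi$ recalled at the start of Section~\ref{sec:adeliccalc}. A direct count gives $(T_p\phi)(1) = p^{1/2}(\chi_1(p) + \chi_2(p))$. On the other hand, the same argument producing (\ref{adelclas}) yields its $T_p$-analogue $T_p\phi = p^{1-\kappa/2}\lambda_f(p)\phi$, so comparing values at $1$ gives $p^{(\kappa-1)/2}(\chi_1(p) + \chi_2(p)) = \lambda_f(p)$. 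Since $f$ has trivial Nebentypus, the central character of $\pi_p$ is trivial, i.e.\ $\chi_1\chi_2 = 1$, which forces $p^{(\kappa-1)/2}\chi_1(p) \cdot p^{(\kappa-1)/2}\chi_2(p) = p^{\kappa-1}$. Hence $\{p^{(\kappa-1)/2}\chi_1(p),\ p^{(\kappa-1)/2}\chi_2(p)\}$ is a pair summing to $\lambda_f(p)$ with product $p^{\kappa-1}$, and by uniqueness coincides with $\{\alpha_p,\beta_p\}$.

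The final step is to invoke Deligne's bound $|\alpha_p| = |\beta_p| = p^{(\kappa-1)/2}$ (Ramanujan--Petersson for holomorphic cusp forms of weight $\kappa \geq 2$), which together with the previous paragraph yields $|\chi_j(p)| = 1$ and therefore $s_j \in i\bfR$ modulo $2\pi i/\log p$. For the conjugate relation $\overline{\alpha_p} = \beta_p$, observe that $\alpha_p\overline{\alpha_p} = |\alpha_p|^2 = p^{\kappa-1} = \alpha_p\beta_p$, and divide by $\alpha_p \neq 0$. The only non-formal input is Deligne's theorem; the one place demanding care is keeping the classical/adelic normalization factor $p^{\kappa/2-1}$ straight when relating Satake parameters to $\alpha_p,\beta_p$.
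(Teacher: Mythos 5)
Your proof is correct and follows essentially the same route as the paper: both rest on the Ramanujan--Petersson (Deligne) bound together with the relation $\alpha_p\beta_p = p^{\kappa-1}$ (equivalently $\chi_1(p)\chi_2(p)=1$), from which $\overline{\alpha_p}=\beta_p$ is a formal consequence. The only difference is that you verify the identification $\{\alpha_p,\beta_p\} = \{p^{(\kappa-1)/2}\chi_1(p),\, p^{(\kappa-1)/2}\chi_2(p)\}$ by an explicit computation of $(T_p\phi)(1)$, whereas the paper simply asserts it and cites \cite{GelbartAFAG} for the first part; your version supplies details the paper outsources.
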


\begin{proof} The first part follows from \cite[p. 92]{GelbartAFAG} and is a direct consequence of the fact that cusp forms on $\GL_2$ satisfy the Ramanujan conjecture.  Observe that $\alpha_p = p^{(\k-1)/2} \chi_1(p)$ and $\beta_{p} = p^{(\k-1)/2} \chi_2(p)$.  Using that $\alpha_p \beta_{p} = p^{\k-1}$, we obtain $\chi_1(p) \chi_2(p) = 1$.  This, combined with the fact that $\chi_{j}(p) = p^{s_{j}}$ with $s_{j}$ purely imaginary implies $\chi_1(p) = \ov{\chi_2(p)}$.  Thus $\ov{\alpha_p} = \beta_p$.  \end{proof}

Using Lemma \ref{Ramanujan} we can simplify %\com{K: replaced `the formula' with a reference}
(\ref{for32}) to
\begin{equation*}
\langle U_p \phi, U_p \phi \rangle = \frac{2p^2-p+1}{p+1} + \frac{p^2-p}{p+1} \tr (\chi_1(p)\ov{\chi_2(p)}).
\end{equation*}
%\com{K: reworded the following to shorten it}
Moreover using that $\alpha_p = p^{(\kappa-1)/2}\chi_1(p)$, $\beta_p=p^{(\kappa-1)/2}\chi_2(p)$
%we have\be \label{12}\chi_1(p) + \chi_2(p) = p^{(1-k)/2} \lambda_f(p).\ee
and $\chi_1(p) = \ov{\chi_2(p)}$ we have
$$\tr (\chi_1(p)\ov{\chi_2(p)}) = %\chi_1(p)^2 + \chi_2(p)^2 =
 (\chi_1(p) + \chi_2(p))^2 - 2\chi_1(p) \chi_2(p) = p^{1-\k}\lambda_f(p)^2 - 2.$$
Thus we obtain
\begin{align*}
\langle U_p \phi, U_p \phi \rangle &= \frac{p+1}{p+1} + \frac{p(p-1)p^{1-k}\lambda_f(p)^2}{p+1}\\
    &=  1+ \frac{(p-1)\lambda_f(p)^2}{p+1}p^{2-k},
\end{align*}
%\com{K: I changed the `as desired.' to:}
 hence we see that by Lemma \ref{classtoadel} this recovers the classical formula from Theorem \ref{classform}.

\section{Applications to $L$-values} \label{Applications to $L$-values}

Let $f \in S_{\kappa}(\Gamma_0(N))$ be a newform.
In this section we apply the results of the previous sections to give a `local' decomposition of the Petersson norm of $f$.  This depends on showing that value $L^{N}(k, \Sym^2 f)$ obtained by meromorphic continuation of $L(s,\Sym^2 f)$ can be expressed as a conditionally convergent Euler product.

Recall that the (partial) \emph{symmetric square $L$-function} of $f$ is defined by the Euler product
\be \label{Ep}
L^N(s, \Sym^2 f) = \prod_{p \nmid N}\frac{1}{L_p(s,\Sym^2 f)},
\ee
where
$$L_{p}(s, \Sym^2 f):=\left(1-\frac{\alpha_{p}^2}{p^{s}}\right)\left(1-\frac{\alpha_{p} \beta_{p}}{p^{s}}\right)\left(1-\frac{\beta_{p}^2}{p^{s}}\right). $$
The product (\ref{Ep}) converges absolutely for $\re s>\kappa$. It is well-known that $L^N(s, \Sym^2 f)$ admits meromorphic continuation to the entire complex plane with possible poles only at $s = \kappa$ and $\kappa - 1$, of order at most one
 \cite[Theorem 1]{ShimuraPLMS75}. In our case (since $f$ is assumed to have trivial character), the $L$-function does not have a pole at $s=\kappa$ \cite[Theorem 2]{ShimuraPLMS75}.  We will continue to denote this extended function by $L^N(s, \Sym^2 f)$. Using that $\alpha_p\beta_p=p^{\kappa-1}$ we conclude that
\begin{equation} \label{form1}
\frac{\langle f_p^{\alpha_p}, f_p^{\alpha_p} \rangle_{Np}}{\langle f, f \rangle_{Np}} = \frac{p^2}{p^2-1}\frac{1}{L_p(\kappa,\Sym^2 f)} =\frac{\zeta_p(2)}{L_p(\kappa, \Sym^2 f)},
\end{equation}
where $\zeta_{p}(s) = 1/(1 - 1/p^s).$

\begin{cor} We have
\begin{equation*}
\langle f_p^{\alpha_p}, f_p^{\alpha_p}\rangle_{Np} = \langle f_p^{\beta_p}, f_p^{\beta_p}\rangle_{Np}.
\end{equation*}
\end{cor}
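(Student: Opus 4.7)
The plan is to observe that the corollary is essentially a symmetry statement, and follows immediately from Theorem \ref{classform} together with the symmetric role played by $\alpha_p$ and $\beta_p$ in the setup.

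First, I would note that the definition of $f_p^{\beta_p}$ is obtained from that of $f_p^{\alpha_p}$ by simply interchanging the two Satake parameters $\alpha_p$ and $\beta_p$. Since these parameters are characterized only as the unordered pair of roots of the Hecke polynomial $X^2 - \lambda_f(p) X + p^{\kappa-1}$, the derivation of Theorem \ref{classform} applies verbatim to $f_p^{\beta_p}$ with the roles of $\alpha_p$ and $\beta_p$ swapped throughout. Therefore
\begin{equation*}
\frac{\langle f_p^{\beta_p}, f_p^{\beta_p} \rangle_{Np}}{\langle f, f \rangle_{Np}} = \frac{p}{p+1}\left(1 - \frac{\beta_p^2}{p^\kappa}\right)\left(1 - \frac{\alpha_p^2}{p^\kappa}\right).
\end{equation*}

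The key observation is then that the right-hand side of the formula in Theorem \ref{classform} is manifestly symmetric under the interchange $\alpha_p \leftrightarrow \beta_p$, so it equals $\frac{\langle f_p^{\alpha_p}, f_p^{\alpha_p} \rangle_{Np}}{\langle f, f \rangle_{Np}}$. Multiplying both sides by $\langle f, f \rangle_{Np}$ yields the claim. Equivalently, one can invoke (\ref{form1}) and observe that the local Euler factor $L_p(\kappa, \Sym^2 f) = (1 - \alpha_p^2/p^\kappa)(1 - \alpha_p \beta_p/p^\kappa)(1 - \beta_p^2/p^\kappa)$ is symmetric in $\alpha_p$ and $\beta_p$, so the same ratio is produced.

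There is no real obstacle here; the only point worth mentioning is that one should double-check that the hypotheses of Theorem \ref{classform} (in particular Lemma \ref{Ramanujan}, which gives $\overline{\alpha_p} = \beta_p$ and hence $\overline{\beta_p} = \alpha_p$) are insensitive to which of the two roots one labels $\alpha_p$ versus $\beta_p$. This is evident, since complex conjugation interchanges the pair. Thus the corollary follows with no new computation.
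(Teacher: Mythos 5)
Your proposal is correct and matches the paper's (implicit) argument: the corollary is stated without proof immediately after the symmetric formula $\frac{p}{p+1}\bigl(1-\alpha_p^2/p^\kappa\bigr)\bigl(1-\beta_p^2/p^\kappa\bigr)$ of Theorem \ref{classform} and its restatement (\ref{form1}) in terms of $L_p(\kappa,\Sym^2 f)$, both of which are invariant under $\alpha_p \leftrightarrow \beta_p$. Your extra check that Lemma \ref{Ramanujan} is insensitive to the labeling of the roots is a sensible precaution and does not change the argument.
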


 Set
\begin{equation*}
\langle f,f\rangle^{(p)}_N:=  \frac{\langle f, f \rangle_{Np}}{\langle f_p^{\alpha_p}, f_p^{\alpha_p} \rangle_{Np}} = \frac{\langle f, f \rangle_{Np}}{\langle f_p^{\beta_p}, f_p^{\beta_p} \rangle_{Np}}.
\end{equation*}
We will now show  that $\langle f,f\rangle^{(p)}_N$ can in some sense be regarded as a `local' (at $p$) period for the symmetric square $L$-function.

\begin{thm}\label{conv1} The value $L^N(\kappa, \Sym^2 f)$ given by the meromorphic continuation is equal to the conditionally convergent Euler product $$\prod_{p \nmid N} \frac{1}{L_{p}(\kappa, \Sym^2 f)}$$ when we order the factors  according to increasing $p$. \end{thm}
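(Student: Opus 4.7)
The plan is to reduce Theorem~\ref{conv1} to the general convergence theorem for Euler products established by Conrad in the appendix; the work in this section is then to verify its hypotheses for the symmetric square $L$-function at the boundary $s=\kappa$.

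First I would record the basic Euler factor arithmetic. By Deligne's proof of the Ramanujan conjecture for holomorphic cusp forms, $|\alpha_p|=|\beta_p|=p^{(\kappa-1)/2}$, so the three reciprocal roots $\alpha_p^2$, $\alpha_p\beta_p=p^{\kappa-1}$, $\beta_p^2$ of $L_p(s,\Sym^2 f)$ all have absolute value $p^{\kappa-1}$. Thus $s=\kappa$ is exactly the edge of the half-plane of absolute convergence. Expanding the logarithm of the local factor at $s=\kappa$ gives
\begin{equation*}
-\log L_p(\kappa,\Sym^2 f)=\frac{\alpha_p^2+\alpha_p\beta_p+\beta_p^2}{p^\kappa}+O(p^{-2})=\frac{\lambda_f(p)^2-p^{\kappa-1}}{p^\kappa}+O(p^{-2}),
\end{equation*}
whose leading term is only of size $1/p$. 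Hence absolute convergence of the product over $p$ is genuinely impossible and only conditional convergence in the order given by increasing $p$ can be hoped for; this is precisely the situation addressed by the appendix.

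Next I would verify the analytic hypotheses of Conrad's theorem for $L^N(s,\Sym^2 f)$. The required inputs are: meromorphic continuation to all of $\bfC$ and holomorphy at the boundary point $s=\kappa$, both of which are supplied by the results of Shimura already cited (the latter requiring the trivial character assumption on $f$); the standard functional equation $s\leftrightarrow 2\kappa-1-s$; polynomial growth on vertical strips; and a zero-free neighborhood reaching the line $\re s=\kappa$. Each of these is classical for the symmetric square, so the verification is essentially bookkeeping.

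With these ingredients in place the appendix is directly applicable and yields exactly the assertion that the partial products $\prod_{p\le X,\,p\nmid N}L_p(\kappa,\Sym^2 f)^{-1}$ tend, as $X\to\infty$, to the meromorphically continued value $L^N(\kappa,\Sym^2 f)$. The main obstacle is not in our verification step but in the conditional convergence itself, which amounts to a Prime Number Theorem style cancellation in $\sum_{p\le X}(\alpha_p^2+\beta_p^2)/p^\kappa$ and is genuinely nontrivial; that cancellation is the heart of Conrad's appendix, while our contribution in this section is limited to supplying the analytic inputs above.
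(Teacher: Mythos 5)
Your overall strategy is the same as the paper's: normalize so that $s=\kappa$ becomes the edge line $\re s=1$ for a degree-$3$ Euler product with unitary parameters (via Deligne/Ramanujan), check the hypotheses of the appendix theorem, and invoke it. Two points in your verification step need repair, and one of them is a genuine gap.

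First, you have imported hypotheses the appendix theorem does not ask for (a functional equation, polynomial growth on vertical strips); its actual requirements are only that the normalized Satake parameters satisfy $|\alpha_{\pp,j}|\le 1$, that $L$ continues analytically to $\re s=1$, and that $L$ is \emph{nonvanishing} on that line. Second, and more seriously, you dismiss exactly the one hypothesis that is not bookkeeping. The nonvanishing of $L^N(s,\Sym^2 f)$ on the edge line is not something you can wave through as ``classical for the symmetric square'' without saying where it comes from: the paper obtains it by first invoking the Gelbart--Jacquet lift, which realizes $L^N(s,\Sym^2\phi_f)$ as the standard $L$-function of an automorphic representation of $\GL_3(\AQ)$, and then applying the Jacquet--Shalika nonvanishing theorem on $\re s=1$ for such $L$-functions. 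Without that (or an equivalent argument, e.g.\ via the Rankin--Selberg factorization), your application of the appendix theorem is unjustified, since a zero on the line would destroy the conditional convergence of the Euler product there. Relatedly, you locate the ``heart'' of the argument in a PNT-style cancellation inside the appendix; in fact the appendix derives everything from Newman's lemma on Dirichlet series with bounded coefficients applied to $\log L$, and the cancellation is a \emph{consequence} of the continuation-plus-nonvanishing inputs that you are responsible for supplying. So the division of labor is the reverse of what you describe: the nonvanishing input is where the substance of this section's proof lies.
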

\begin{proof} Let $\phi_f$ be defined as in (\ref{adelicdef}) and let $\chi_1(p)=\alpha_p/p^{(\kappa-1)/2}$ and $\chi_2(p)=\beta_p/p^{(\kappa-1)/2}$ be its Satake parameters for $p \nmid N$ as in Section \ref{sec:adeliccalc}. For $p \nmid N$ define $$L_{p}(s, \Sym^2 \phi_f):= \left(1-\frac{\chi_1(p)^2}{p^{s}}\right)\left(1-\frac{1}{p^{s}}\right)\left(1-\frac{\chi_2(p)^2}{p^{s}}\right)$$ and note that $L_{p}(s, \Sym^2 \phi_f)=L_{p}(s+\kappa-1, \Sym^2 f)$. Thus the Euler product $$L^N(s, \Sym^2 \phi_f):=\prod_{p \nmid N}\frac{1}{L_{p}(s, \Sym^2 \phi_f)}$$ converges absolutely for $\re s>1$ and inherits all the corresponding properties (in particular the meromorphic continuation and the lack of a pole at $s=1$) from $L^N(s, \Sym^2 f)$. As before we will continue to denote this extended function by $L^N(s, \Sym^2 \phi_f)$.

Let $\pi$ be the automorphic representation of $\GL_2(\AQ)$ associated with $\phi_f$. It is known  \cite[Theorem 9.3]{GelbartJacquet78} that there exists an automorphic representation $\sigma$ of $\GL_3(\AQ)$ such that the (partial) standard $L$-function $L^N(s, \sigma)$ coincides with $L^N(s, \Sym^2 \pi):=L^N(s, \Sym^2 \phi_f)$. Also $L^N(s, \sigma)$ does not vanish on the line $\re s=1$ by a result of Jacquet and Shalika (see \cite[Theorem 1]{JacquetShalika76}; see also \cite{KohnenSengupta00}).
Finally note that by Lemma \ref{Ramanujan}, we have $|\chi_1(p)| = |\chi_2(p)| = 1$ if $p \nmid N$.
%we have $|\chi_1(p)|=|\chi_2(p)|=1$.
Thus we are in a position to apply Theorem \ref{ethm} in the appendix with $K=\bfQ$ and $d=3$ to $L^N(s, \Sym^2 \phi_f)$  and the theorem follows.
% $\ell$-Euler factor of $L^N(s, \Sym^2 f)$ can be re-written as $$L_{\ell}(s, \Sym^2 f):=(1-\chi_1(\ell)^2\ell^{\kappa-1-s})(1-\chi_2(\ell)^2\ell^{\kappa-1-s})(1-\ell^{\kappa-1-s}),$$ where $\chi_1(\ell)=\ell^{(\kappa-1)/2}\alpha_{\ell}$ and $\chi_2(\ell)=\ell^{(\kappa-1)/2}\beta_{\ell}$. Setting $s'=s-\kappa+1$, we see that $L^N(s, \Sym^2 f)=L^N_{\rm aut}(s', \Sym^2 f)$, where $L^N_{\rm aut}(s', \Sym^2 f)$ is given by the Euler product $$\prod_{\ell \nmid N} \left\{(1-\chi_1(\ell)^2\ell^{s'})(1-\chi_2(\ell)^2\ell^{s'})(1-\ell^{s'})\right\}$$ which converges absolutely for $\re s>1$.
\end{proof}
%\com{K: Changed the `INSERT PROOF' to `Postponed.'}

By Theorem \ref{conv1} and (\ref{form1}) we have $$\frac{L^N(\kappa, \Sym^2 f)}{\prod_{p \nmid N} \langle f,f\rangle^{(p)}_N} = \zeta^N(2),$$
where the superscript means that we omit the Euler factors at primes dividing $N$, and the product $\prod_{p \nmid N}$ (here and below) is ordered according to increasing $p$.
Using \cite[Theorem 5.1]{HidaInvent81} we have $$L^N(\kappa, \Sym^2 f) = \prod_{p\mid N}\left(1-\frac{\lambda_f(p)^2}{p^{\kappa}}\right) \times \frac{2^{2\kappa}\pi^{\kappa+1}}{(\kappa-1)! \delta(N) N \phi(N)}\langle f,f \rangle_N,$$ where $\delta(N)=2$ or 1 according as $N \leq 2$ or not. Using this we obtain the following corollary that can be viewed as a factorization of the `global' period $\langle f,f\rangle_N$ in terms of the `local' periods $ \langle f,f\rangle^{(p)}_N$.

\begin{cor}  We have
\begin{equation*}
\langle f,f \rangle_N = \frac{(\kappa-1)! \delta(N) N \phi(N)\zeta^N(2)}{2^{2\kappa}\pi^{\kappa+1}} \prod_{p\mid N}\frac{1}{1-\lambda_f(p)^2/p^{\kappa}}  \prod_{p \nmid N} \langle f,f\rangle^{(p)}_N.
\end{equation*}
\end{cor}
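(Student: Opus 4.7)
The corollary follows by straightforward algebra from the two inputs that have already been assembled earlier in this section: on the one hand, Hida's formula
\[
L^N(\kappa, \Sym^2 f) \;=\; \prod_{p\mid N}\!\left(1-\frac{\lambda_f(p)^2}{p^{\kappa}}\right)\cdot\frac{2^{2\kappa}\pi^{\kappa+1}}{(\kappa-1)!\,\delta(N)\,N\,\phi(N)}\,\langle f,f\rangle_N,
\]
cited from \cite[Theorem 5.1]{HidaInvent81}, and on the other hand the conditionally convergent factorization
\[
\frac{L^N(\kappa, \Sym^2 f)}{\prod_{p \nmid N} \langle f,f\rangle^{(p)}_N}\;=\;\zeta^N(2),
\]
which was deduced from Theorem \ref{conv1} together with the local identity (\ref{form1}) immediately above the corollary. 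The plan is simply to combine these two equalities.

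The first step is to solve Hida's formula for the Petersson norm, yielding
\[
\langle f,f\rangle_N \;=\; \frac{(\kappa-1)!\,\delta(N)\,N\,\phi(N)}{2^{2\kappa}\pi^{\kappa+1}}\prod_{p\mid N}\frac{1}{1-\lambda_f(p)^2/p^{\kappa}}\cdot L^N(\kappa,\Sym^2 f).
\]
The second step is to replace $L^N(\kappa,\Sym^2 f)$ on the right-hand side using the displayed factorization above, namely $L^N(\kappa, \Sym^2 f) = \zeta^N(2)\prod_{p\nmid N}\langle f,f\rangle^{(p)}_N$. Carrying out this substitution and rearranging the constant produces exactly the expression asserted in the corollary.

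Because both ingredients have already been established, I do not expect any genuine obstacle: the proof is a one-line substitution. The only point deserving a moment's care is that the Euler product $\prod_{p\nmid N}\langle f,f\rangle^{(p)}_N$ is only conditionally convergent and must be taken in order of increasing $p$, but this ordering is inherited directly from Theorem \ref{conv1}, so it is preserved automatically when inserted into Hida's formula. Thus the result follows without any additional analytic input.
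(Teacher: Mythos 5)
Your proof is correct and is exactly the argument the paper intends: the corollary is stated immediately after the two displayed identities (Hida's formula and $L^N(\kappa,\Sym^2 f)/\prod_{p\nmid N}\langle f,f\rangle^{(p)}_N=\zeta^N(2)$) precisely because it follows by solving the former for $\langle f,f\rangle_N$ and substituting the latter. Your remark that the conditional convergence and the ordering by increasing $p$ are inherited from Theorem \ref{conv1} is the only analytic point to watch, and you handle it correctly.
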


%Finally, we have the following corollary that follows from the fact that the product of the local periods converges and $\langle f,f\rangle_{Np} = (p+1)\langle f, f\rangle_N$.

%\begin{cor}  We have \begin{equation*} \lim_{\substack{p\to \infty\\ p\hs\textup{is prime}}} \frac{\langle f_p^{\alpha_p}, f_p^{\alpha_p} \rangle_{Np}}{p+1}=\langle f, f \rangle_N. \end{equation*}\end{cor}

\appendix
\section{Convergence of Euler products on ${\rm Re}(s) = 1$  \\ by Keith Conrad$^3$}
Let $K$ be a number field.  A degree $d$ Euler product over $K$ is a product
$$
L(s) = \prod_{\mathfrak p} \frac{1}{(1 - \alpha_{\pp,1}\Nm\pp^{-s})\cdots (1 - \alpha_{\pp,d}\Nm\pp^{-s})},
$$
where $|\alpha_{\pp,j}| \leq 1$ for all nonzero prime ideals $\pp$ in the integers of $K$ and $1 \leq j \leq d$.  On the half-plane $\re(s) > 1$ this converges absolutely and is nonvanishing. Combining factors at prime ideals lying over a common prime number,
$L(s)$ is also an  Euler product over $\QQ$ of degree $d[K:\QQ]$.

We want to prove a general theorem about the representability of $L(s)$ by its Euler product on the line $\re(s) = 1$.  If $L(s)$ is the $L$-function of a nontrivial Dirichlet character, this is in \cite[pp.~57--58]{davenport}, \cite[$\S$ 109]{landau}, and \cite[p. 124]{mv} if $s = 1$ and \cite[$\S$ 121]{landau} if $\re(s) = 1$.
%We want to prove a general theorem about the representability of $L(s)$ by its Euler product on the line $\re(s) = 1$.  When $L(s)$ is a Dirichlet $L$-function and $s = 1$, this is discussed in \cite[pp.~57--58]{davenport} and \cite[p. 124]{mv}.

\begin{thm}\label{ethm}
If $L(s)$ is a degree $d$ Euler product over $K$
and it admits an analytic continuation to $\re(s) = 1$ where it is nonvanishing, then $L(s)$ is equal to its Euler product on $\re(s) = 1$
when factors are ordered according to prime ideals of increasing norm: if $\re(s) = 1$ then
$$
L(s) =
\lim_{x \rightarrow \infty} \prod_{\Nm\pp \leq x} \frac{1}{(1 - \alpha_{\pp,1}\Nm\pp^{-s})\cdots (1 - \alpha_{\pp,d}\Nm\pp^{-s})}.
$$
\end{thm}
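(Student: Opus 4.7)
The plan is to work with the logarithm of the partial Euler product
\[
P(x,s) := \prod_{\Nm\pp \leq x} \prod_{j=1}^{d} (1 - \alpha_{\pp,j}\Nm\pp^{-s})^{-1}
\]
and to show that $\log P(x,s) \to \log L(s)$ as $x \to \infty$ for each fixed $s$ with $\re(s) = 1$; exponentiating then yields the theorem. Expanding each local factor via the power-series logarithm gives
\[
\log P(x,s) \;=\; \sum_{\Nm\pp \leq x} \sum_{k \geq 1} \frac{1}{k}\,\frac{\alpha_{\pp,1}^{k} + \cdots + \alpha_{\pp,d}^{k}}{\Nm\pp^{ks}}.
\]
Because $|\alpha_{\pp,j}| \leq 1$ and $\sum_\pp \Nm\pp^{-2\sigma}$ converges for $\sigma > 1/2$, the contribution of the terms with $k \geq 2$ converges absolutely and uniformly on compact subsets of $\re(s) > 1/2$, so it causes no trouble at $\re(s) = 1$. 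The theorem thus reduces to proving conditional convergence, at each $s_0$ with $\re(s_0) = 1$ and ordered by increasing $\Nm\pp$, of the linear ($k=1$) series
\[
\Phi(s) \;:=\; \sum_{\pp} \frac{a_\pp}{\Nm\pp^{s}}, \qquad a_\pp \;:=\; \alpha_{\pp,1} + \cdots + \alpha_{\pp,d}.
\]

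The first (easy) step is analytic: since $L(s)$ is analytic and nonvanishing in some open neighborhood of the line $\re(s)=1$, $\log L(s)$ is analytic there, and subtracting the higher-order ($k\geq 2$) piece, which is already analytic on $\re(s) > 1/2$, shows that $\Phi(s)$ extends analytically to a neighborhood of $\re(s) = 1$. The real work is to promote this analytic continuation to genuine convergence of the partial sums $\sum_{\Nm\pp \leq x} a_\pp \Nm\pp^{-s_0}$.

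The promotion will be carried out by a Tauberian argument of Wiener--Ikehara/Newman type. Applied to $-L'(s)/L(s)$, the nonvanishing and analyticity hypotheses yield, at each $s_0 = 1 + it_0$, a prime-number-theorem-style bound
\[
\sum_{\Nm\pp \leq x} a_\pp \,\Nm\pp^{-it_0} \log \Nm\pp \;=\; o(x),
\]
the absolutely convergent $k\geq 2$ portion of $-L'/L$ being harmless because $|\alpha_{\pp,j}| \leq 1$. Partial summation then converts this $o(x)$ bound into convergence of $\sum_{\Nm\pp \leq x} a_\pp/\Nm\pp^{s_0}$, with limit necessarily equal to $\Phi(s_0)$ since both sides are analytic extensions of the same function from $\re(s) > 1$; exponentiating returns $P(x,s_0) \to L(s_0)$.

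The main obstacle is precisely this Tauberian step. A Dirichlet series with bounded coefficients admitting an analytic continuation past $\re(s) = 1$ need not converge there (witness $\sum_{n} \sin(\log n) n^{-s}$ at $s = 1$, which equals $\frac{1}{2i}(\zeta(s-i)-\zeta(s+i))$ and is analytic at $s=1$ but diverges as a series), so the argument must use more than bare analytic continuation: it exploits both the prime support of $\Phi$ and the tight relation between $\Phi$ and $\log L$ that lets the nonvanishing hypothesis feed directly into the Wiener--Ikehara input.
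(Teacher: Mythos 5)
Your reduction to the conditional convergence of the linear series $\sum_{\pp} a_\pp \Nm\pp^{-s_0}$ is sound and plays the same role as the rearrangement step in the paper's proof, but the Tauberian step you propose to finish with has a genuine gap, in two places. First, the coefficients $a_\pp \Nm\pp^{-it_0}\log\Nm\pp$ are complex and of no fixed sign, so neither Wiener--Ikehara nor the Tauberian half of Newman's proof of the prime number theorem (which require nonnegativity, resp.\ monotonicity of the summatory function) applies to $-L'/L$ to produce the asserted estimate $\sum_{\Nm\pp\le x} a_\pp \Nm\pp^{-it_0}\log\Nm\pp = o(x)$. Second, and more decisively, even if that estimate were granted, partial summation does not yield convergence: writing $A(t)$ for the weighted summatory function, the partial-summation integral is $\int^x A(t)\,O\bigl(t^{-2}(\log t)^{-1}\bigr)\,dt$, and $A(t)=o(t)$ only bounds the integrand by $o\bigl(1/(t\log t)\bigr)$, which is not integrable because $\int^{\infty} dt/(t\log t)$ diverges. (This is precisely why the convergence of $\sum_p p^{-1-it}$ for $t\neq 0$ does not follow from the prime number theorem with an $o(x/\log x)$ error term.) Since the hypotheses give no zero-free region of quantified width, you cannot repair this by upgrading the $o(x)$ to a power-of-$\log$ saving.

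The missing ingredient is the lemma the paper actually uses (Lemma \ref{rthm}, due to Newman): a Dirichlet series with \emph{bounded} coefficients that continues analytically to the \emph{entire} closed half-plane $\re(s)\ge 1$ converges to that continuation at every point of the line $\re(s)=1$. Applied directly to $(\log L)(s)$, whose Dirichlet coefficients are supported on prime powers and bounded by $d[K:\QQ]$, this gives boundary convergence in one stroke; the only remaining work is the elementary rearrangement from ordering by $\Nm(\pp^k)$ to ordering by $\Nm(\pp)$, which your absolute-convergence observation for the $k\ge 2$ terms essentially covers. Note that your example $\sum_n \sin(\log n)\,n^{-s}=\tfrac{1}{2i}\bigl(\zeta(s-i)-\zeta(s+i)\bigr)$ does not threaten this lemma: that function has poles at $s=1\pm i$, so it is analytic at $s=1$ but not on the whole line, whereas the lemma demands global analyticity on $\re(s)\ge 1$. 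The ``more than bare analytic continuation'' you correctly sense is needed is exactly this global (rather than pointwise) analyticity fed into Newman's boundary-convergence theorem, not a weighted prime-counting estimate.
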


The proof is based on the following lemma about representability of a Dirichlet series on the line $\re(s) = 1$.

\begin{lemma}\label{rthm}
Suppose $g(s) = \sum_{n \geq 1} b_n n^{-s}$ has bounded
Dirichlet coefficients. If
$g(s)$ admits an analytic continuation from $\re(s) > 1$ to $\re(s) \geq 1$, then $g(s)$ is still represented by
its Dirichlet series on the line $\re(s) = 1$.
\end{lemma}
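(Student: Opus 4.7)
The plan is to work locally at each point $s_0$ on the line $\re(s) = 1$. By replacing $g(s)$ with $g(s + s_0 - 1)$---whose coefficients $b_n n^{1-s_0}$ remain bounded in absolute value and whose analyticity on $\re(s) \geq 1$ is inherited---it suffices to treat the case $s_0 = 1$ and show
\[
\sum_{n \leq x} \frac{b_n}{n} \longrightarrow g(1) \quad \text{as } x \to \infty.
\]
Abel summation expresses this partial sum in terms of $B(x) := \sum_{n \leq x} b_n$ as
\[
\sum_{n \leq x} \frac{b_n}{n} = \frac{B(x)}{x} + \int_1^x \frac{B(u)}{u^2} \, du,
\]
reducing the lemma to showing $B(x) = o(x)$ and identifying the limit of the integral with $g(1)$.

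The core step is the bound $B(x) = o(x)$. I would attack this via an effective (truncated) Perron formula, representing $B(x)$ as a contour integral of $g(s) x^s / s$ on $\re(s) = 1 + 1/\log x$ plus an error of size $O(x M \log x / T)$, where $M$ bounds $|b_n|$. Since $g(s)$ is analytic on an open neighborhood of the closed half-plane $\re(s) \geq 1$, the contour can be deformed just to the left of $\re(s) = 1$ without crossing singularities. The deformed integral is then estimated using the trivial uniform bound $|g(s)| \leq M \zeta(\re(s))$ for $\re(s) > 1$ together with a convexity argument (Phragm\'en--Lindel\"of or Hadamard's three-lines theorem), with the truncation parameter $T$ chosen to balance the Perron error against the shifted-contour contribution.

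Given $B(x) = o(x)$, the boundary term $B(x)/x$ in the Abel summation vanishes in the limit. Convergence of $\int_1^x B(u)/u^2 \, du$ to $g(1)$ is then extracted from the Mellin representation $g(s) = s \int_1^\infty B(u) u^{-s-1} \, du$ (valid for $\re(s) > 1$) by the same contour deformation argument, or by a direct integration-by-parts analysis using the refined decay of $B(u)$ needed for convergence of the improper integral.

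The principal obstacle is quantitative control of $g(s)$ on vertical lines just to the right of $\re(s) = 1$: analyticity on the closed half-plane does not, on its own, supply any growth bound in the imaginary direction. I expect the resolution to hinge on a Phragm\'en--Lindel\"of-type interpolation between the easy uniform bound on $\re(s) = 1 + \epsilon$ and local analyticity near each point of $\re(s) = 1$, coupled with an appropriately tuned truncation $T = T(x) \to \infty$ as $x \to \infty$.
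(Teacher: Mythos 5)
Your plan founders at exactly the point you flag as the principal obstacle, and the proposed resolution does not overcome it. A Phragm\'en--Lindel\"of or three-lines argument needs three inputs: a full strip (or sector) on which $g$ is defined, bounds on both boundary lines, and an a priori finite-order growth hypothesis in the interior. None of these is available here. The hypothesis gives analyticity only on \emph{some} open neighbourhood of the closed half-plane $\re(s)\ge 1$; that neighbourhood may pinch arbitrarily close to the line as $|\im(s)|\to\infty$, so there is no fixed $\delta>0$ with $g$ analytic on $\re(s)>1-\delta$, and there is no bound at all on $|g(1+it)|$ as $|t|\to\infty$ (the trivial bound $|g(s)|\le M\zeta(\sigma)$ lives strictly to the right of the line and blows up as $\sigma\downarrow 1$). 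Consequently, in the truncated Perron formula the deformed contour of height $T$ contributes a term governed by the uncontrolled quantities $\max_{|t|\le T}|g|$ and the leftward displacement $\delta(T)$, while the truncation error forces $T/\log x\to\infty$; no choice of $T(x)$ provably balances these. (For $L$-functions with a functional equation one does have polynomial growth in vertical strips and the Perron route works, but the lemma is applied in this paper to $\log L$, for which even that is unavailable without a quantitative zero-free region.) A second, smaller gap: $B(x)=o(x)$ does not by itself imply convergence of $\int_1^\infty B(u)u^{-2}\,du$, so even granting the first step you would still need a rate of decay you have not produced.

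The missing idea is Newman's contour-kernel trick, which is what the paper's one-line proof (a citation to \cite{newman}) invokes. One fixes $R>0$, takes the \emph{finite} contour consisting of the arc $|z|=R$, $\re(z)\ge -\eta$, together with the segment $\re(z)=-\eta$, $|z|\le R$ (where $\eta=\eta(R)>0$ exists by compactness), and integrates $\bigl(g(z+s_0)-S_N(z+s_0)\bigr)N^{z}\bigl(\tfrac1z+\tfrac{z}{R^2}\bigr)\,dz$, where $S_N$ is the $N$th partial sum of the Dirichlet series. The kernel $\tfrac1z+\tfrac{z}{R^2}$ vanishes on $|z|=R$ in a way that exactly cancels the blow-up of the tail and partial-sum estimates near $\re(z)=0$, yielding $|g(s_0)-S_N(s_0)|\le C/R+o(1)$ as $N\to\infty$ for each fixed $R$; letting $N\to\infty$ and then $R\to\infty$ finishes. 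Because the contour is compact for each fixed $R$, no growth estimate on $g$ at infinity is ever needed --- which is precisely the estimate your approach cannot do without.
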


\begin{proof}
See \cite{newman}.
\end{proof}

Here is the proof of Theorem \ref{ethm}.

\begin{proof}
We will apply Lemma \ref{rthm} to a logarithm of $L(s)$, namely the absolutely convergent Dirichlet series
$$
(\log L)(s) := \sum_{\pp} \sum_{k \geq 1} \frac{\alpha_{\pp,1}^k + \cdots + \alpha_{\pp,d}^k}{k\Nm\pp^{ks}},
$$
where $\re(s) > 1$.
The coefficient of $1/\Nm\pp^{ks}$ has absolute value at most $d/k \leq d$, so if we collect terms and write
$(\log L)(s)$ as a Dirichlet series indexed by the positive integers, say $\sum_{n \geq 1} c_n/n^s$, then
$c_n = 0$ if $n$ is not a prime power and $|c_n| \leq d[K:\QQ]$ if $n$ is a prime power.  Therefore
the coefficients of $(\log L)(s)$ as a Dirichlet series over $\ZZ^+$ are bounded.

Since $L(s)$ is assumed to have an analytic continuation to a nonvanishing function on $\re(s) \geq 1$,
$(\log L)(s)$ has an analytic continuation to $\re(s) \geq 1$, so Lemma \ref{rthm} implies that
\begin{equation}\label{L1t}
(\log L)(s) = \sum_{\pp^k}  \frac{\alpha_{\pp,1}^k + \cdots + \alpha_{\pp,d}^k}{k\Nm\pp^{ks}}
\end{equation}
for $\re(s) = 1$, where the terms in the series are collected in order of increasing values of $\Nm(\pp^k)$.

Although a rearrangement of terms in a conditionally convergent series can change its value,
one particular rearrangement of the series in (\ref{L1t}) doesn't change the sum:
\begin{equation}\label{apt}
\sum_{\pp^k} \frac{\alpha_{\pp,1}^k + \cdots + \alpha_{\pp,d}^k}{k\Nm\pp^{ks}} =
\sum_{\pp}\sum_{k \geq 1} \frac{\alpha_{\pp,1}^k + \cdots + \alpha_{\pp,d}^k}{k\Nm\pp^{ks}}
\end{equation}
when $\re(s) = 1$, where the sum on the left is in order of increasing values of $\Nm(\pp^k)$ and
the outer sum on the right is in order of increasing values of $\Nm(\pp)$.
To prove (\ref{apt}), we rewrite it as
\begin{equation}\label{sum2}
 \sum_{\Nm(\pp^k) \leq x}
  \frac{\alpha_{\pp,1}^k + \cdots + \alpha_{\pp,d}^k}{k\Nm\pp^{ks}} =
\sum_{\Nm(\pp) \leq x} \sum_{k \geq 1}
 \frac{\alpha_{\pp,1}^k + \cdots + \alpha_{\pp,d}^k}{k\Nm\pp^{ks}}
  + o(1)
\end{equation}
as $x \rightarrow \infty$, and we will prove (\ref{sum2})
when $\re(s) > 1/2$, not just $\re(s) = 1$.  For $\re(s) = 1$ we can pass to the limit in (\ref{sum2})
as $x \rightarrow \infty$ and conclude (\ref{apt}).

%The difference between the two sums in (\ref{sum2}) is
The sum on the right in (\ref{sum2})  the sum on the left in (\ref{sum2}) is equal to
%\begin{eqnarray*}
%\sum_{\Nm(\pp) \leq x} \sum_{k \geq 1} \frac{\alpha_{\pp,1}^k + \cdots + \alpha_{\pp,d}^k}{k\Nm\pp^{ks}}-\sum_{\Nm(\pp^k) \leq x} \frac{\alpha_{\pp,1}^k + \cdots + \alpha_{\pp,d}^k}{k\Nm\pp^{ks}}& &
 $$ \sum_{\Nm(\pp) \leq x} \sum_{\substack{k \geq 2\\ \Nm(\pp)^k > x}}
  \frac{\alpha_{\pp,1}^k + \cdots + \alpha_{\pp,d}^k}{k\Nm\pp^{ks}},$$
%\end{eqnarray*}
which is equal to
\begin{equation}\label{bigt}
\sum_{\sqrt{x} < \Nm(\pp) \leq x}
\sum_{k \geq 2}
 \frac{\alpha_{\pp,1}^k + \cdots + \alpha_{\pp,d}^k}{k\Nm\pp^{ks}}
 +
\sum_{\Nm(\pp) \leq \sqrt{x}} \sum_{\substack{k \geq 3 \\ \Nm(\pp)^k > x}}
 \frac{\alpha_{\pp,1}^k + \cdots + \alpha_{\pp,d}^k}{k\Nm\pp^{ks}}.
\end{equation}
The absolute value of the first sum in (\ref{bigt}) is bounded above by
\begin{eqnarray*}
\sum_{\sqrt{x} < \Nm(\pp) \leq x} \sum_{k \geq 2} \left|\frac{\alpha_{\pp,1}^k + \cdots + \alpha_{\pp,d}^k}{k\Nm\pp^{ks}}\right| & \leq &
\sum_{\sqrt{x} < \Nm(\pp) \leq x} \sum_{k \geq 2} \frac{d}{k\Nm\pp^{k\sigma}} \ \ \text{ where } \sigma = \re(s) \\
& < &  \frac{d}{2}\sum_{\sqrt{x} < \Nm(\pp) \leq x} \sum_{k \geq 2} \frac{1}{\Nm\pp^{k\sigma}} \\
& = & \frac{d}{2}\sum_{\sqrt{x} < \Nm(\pp) \leq x} \frac{1}{\Nm\pp^{\sigma}(\Nm\pp^{\sigma} - 1)} \\
& < & \frac{d}{2}\sum_{\sqrt{x} < \Nm(\pp) \leq x} \frac{4}{\Nm\pp^{2\sigma}} \quad \textup{since $\Nm(\mathfrak p)^\sigma > \sqrt{2} > \frac{4}{3}$,}
\end{eqnarray*}
which tends to 0 as $x \rightarrow \infty$ since
$\sum_{\pp} 1/\Nm\pp^{2\sigma}$ converges.
The absolute value of the second sum in (\ref{bigt}) is bounded above by
\begin{align*}
\sum_{\Nm(\pp) \leq \sqrt{x}} \sum_{\substack{k \geq 3\\ \Nm(\pp)^k > x}}  \frac{d}{k\Nm\pp^{k\sigma}} &<
\sum_{\Nm(\pp) \leq \sqrt{x}} \sum_{\substack{k \geq 3\\ \Nm(\pp)^k > x}}  \frac{d}{\log_{\Nm\pp}(x)\Nm\pp^{k\sigma}}\\
    &=
\sum_{\Nm(\pp) \leq \sqrt{x}} \frac{d\log \Nm(\pp)}{\log x}\sum_{\substack{k \geq 3 \\ \Nm(\pp)^k > x}}  \frac{1}{\Nm\pp^{k\sigma}}.
\end{align*}
Letting $n$ be the least integer above $\log_{\Nm\pp}(x)$,
$$
\sum_{\substack{k \geq 3 \\ \Nm(\pp)^k > x}}  \frac{1}{\Nm\pp^{k\sigma}}  = \frac{1/\Nm(\pp)^{n\sigma}}{1 - 1/\Nm(\pp)^{\sigma}} < \frac{1/x^{\sigma}}{1/4} = \frac{4}{x^{\sigma}},
$$
so
$$
\sum_{\Nm(\pp) \leq \sqrt{x}} \sum_{\substack{k \geq 3 \\ \Nm(\pp)^k > x}}  \frac{d}{k\Nm\pp^{k\sigma}} <
\sum_{\Nm(\pp) \leq \sqrt{x}} \frac{4d\log \Nm(\pp)}{x^{\sigma}\log x}  = O\left(\frac{\sqrt{x}}{x^{\sigma}\log x}\right),
$$
which tends to 0 as $x \rightarrow \infty$ since $\sigma > 1/2$.

Now that we established (\ref{apt}), take the exponential of the right side: if $\re(s) = 1$, then
$$
\prod_{\pp} \exp\left(\sum_{k \geq 1} \frac{\alpha_{\pp,1}^k + \cdots + \alpha_{\pp,d}^k}{k\Nm\pp^{ks}}\right) =
\prod_{\pp} \frac{1}{(1 - \alpha_{\pp,1}\Nm\pp^{-s})\cdots (1 - \alpha_{\pp,d}\Nm\pp^{-s})},
$$
where the products run over $\pp$ in order of increasing norms and the
last calculation is justified since $|\alpha_{\pp,j}/\Nm(\pp)^s| \leq 1/\Nm(\pp) < 1$.
Since $L(s) = e^{(\log L)(s)}$ for $\re(s) \geq 1$,
by (\ref{L1t}) and (\ref{apt}) we have
$$
L(s) =
\prod_{\pp} \frac{1}{(1 - \alpha_{\pp,1}\Nm\pp^{-s})\cdots (1 - \alpha_{\pp,d}\Nm\pp^{-s})}
$$
for $\re(s) = 1$, where the product is in order of increasing values of $\Nm\pp$.
\end{proof}

\begin{example}
Let $L(s)$ be the $L$-function of the elliptic curve $y^2 = x^3 - x$ over $\QQ$.  For $\re(s) > 3/2$ it
has an Euler product over the odd primes of the form
\begin{equation}\label{L1p}
L(s) = \prod_{p \not= 2} \frac{1}{1 - a_p p^{-s} + p \cdot p^{-2s}} =
\prod_{p \not= 2} \frac{1}{(1 - \alpha_{p}p^{-s})(1 - \beta_{p}p^{-s})},
\end{equation}
where $|\alpha_{p}| = \sqrt{p}$ and $|\beta_{p}| = \sqrt{p}$ for $p \not= 2$.
Since $y^2 = x^3 - x$ has CM by $\ZZ[i]$,
$L(s)$ is also the $L$-function of a Hecke character $\chi$ on $\QQ(i)$ such that
$|\chi((\alpha))| = |\alpha| =  |\Nm(\alpha)|^{1/2}$ for all nonzero $\alpha$ in $\ZZ[i]$ with odd norm.  Therefore $L(s)$ also
has an Euler product over the nonzero prime ideals of $\ZZ[i]$ of odd norm: for $\re(s) > 3/2$,
\begin{equation}\label{L2p}
L(s) = \prod_{(\pi) \not= (1+i)} \frac{1}{1 - \chi(\pi)/\Nm(\pi)^s}.
\end{equation}

The function $L(s)$ is entire and is nonvanishing on the line $\re(s) = 3/2$,
so $L(s + 1/2)$ fits the conditions of Theorem \ref{ethm} using $K = \QQ$ and $d = 2$ for (\ref{L1p}), and $K = \QQ(i)$ and $d = 1$ for (\ref{L2p}). Therefore (\ref{L1p}) and (\ref{L2p}) are both true
on the line $\re(s) = 3/2$.  For instance,
$L(3/2) \approx .826348$, the partial Euler product for (\ref{L1p}) at $s = 3/2$ over prime numbers up to 100,000 is $\approx .826290$, and
the partial Euler product for (\ref{L2p}) at $s = 3/2$ over nonzero prime ideals in $\ZZ[i]$ with norm
up to 100,000 is $\approx .826480$.
\end{example}

%\end{appendices}

\bibliographystyle{plain}
 \bibliography{mybib}

%\bibitem{davenport}
%H. Davenport, Multiplicative Number Theory, 3rd ed., Springer-Verlag, New York, 2000.
%
%\bibitem{mv}
%H. Montgomery and R. C. Vaughan, Multiplicative Number Theory I. Classical Theory.
%Cambridge University Press, Cambridge, 2006.
%
%\bibitem{newman}
%D. J. Newman, ``Simple Analytic Proof of the Prime Number Theorem'', Amer. Math. Monthly {\bf 87} (1980), 693--696.
%
%
%
%
%
%\end{thebibliography}

 \end{document}